\numberwithin{equation}{section}
\newtheorem{thm}{Theorem}[section]
\newtheorem{lem}[thm]{Lemma}
\newtheorem{cor}[thm]{Corollary}
\newtheorem{prop}[thm]{Proposition}
\theoremstyle{remark}
\newtheorem{rem}{Remark}[section]
\newcommand{\tref}[1]{Theorem~\ref{#1}}
\newcommand{\cref}[1]{Korollar~\ref{#1}}
\newcommand{\R}{\mathbb{R}}
\newcommand{\Le}{\mathcal{L}}
\newcommand{\extremal}{maximal }
\newcommand{\Extremal}{Maximal }
\begin{document}
%\tableofcontents
\pagebreak
%\bibliographystyle{alpha}

%\pagenumbering{roman}

\title{Lorentz meets Lipschitz}

\author{Christian Lange, Alexander Lytchak, Clemens S\"amann}
\address{Mathematisches Institut, Universit\"at K\"oln, Germany; Department of Mathematics, University of Toronto, Canada.}
\email{\tt clange@math.uni-koeln.de, alytchak@math.uni-koeln.de, clemens.saemann@utoronto.ca}
\date{\today}
\thanks{We thank Benedict Schinnerl for helpful correspondence. C.L.\ and A.L.\ were supported by the DFG-grant SFB/TRR 191 ``Symplectic structures in Geometry, Algebra and Dynamics'', C.S.\ was supported by research grant J4305 of the Austrian Science Fund FWF}

\keywords{geodesics, \extremal curves, low regularity, Lipschitz Lorentzian metrics, Filippov solutions}

\begin{abstract} We show that \extremal causal curves for a Lipschitz continuous Lorentzian metric admit a $\mathcal{C}^{1,1}$-parametrization 
 and that they solve the geodesic equation in the sense of Filippov in this parametrization. Our proof shows that \extremal causal curves are either everywhere lightlike or everywhere timelike. Furthermore, the proof demonstrates that \extremal causal curves for an $\alpha$-H\"older continuous Lorentzian metric admit a $\mathcal{C}^{1,\frac{\alpha}{4}}$-parametrization.

% \cs{and that their tangent is nowhere lightlike (in a quantitative way). Furthermore, we establish that \\extremal curves for a H\"older continuous Lorentzian metric admit a $\mathcal{C}^{1,\frac{1}{4}}$ parametrization.}

% This resolves questions by S\"{a}mann--Steinbauer that arose in their work on causality theory and properties of \extremal curves in Lipschitz continuous Lorentzian manifolds.
\end{abstract}

\subjclass[2010]{53B30, 34A36, 83C99}

\maketitle

%\footnotetext[1]{}

\section{Introduction}
%\subsection{General facts}

Causality theory is a very active  part of mathematical general relativity and Lorentzian geometry, see the current comprehensive review \cite{Minguzzi:19b}. Recently, in several works \cite{Fathi_Siconolfi:12}, \cite{Chrusciel_Grant:12}, \cite{KSSV:14}, \cite{Minguzzi:15}, \cite{Saemann:16}, \cite{BS:18}, \cite{Minguzzi:19a}, \cite{GKSS:20} the theory was extended to the setting of non-smooth Lorentzian and Lorentz-Finsler manifolds and towards a synthetic theory \cite{Kunzinger_Saemann:2018, AGKS:19, ACS:20, CM:20}, where there might not be a differentiable or manifold structure, analogous to length or Alexandrov spaces in metric geometry. Moreover, issues of low regularity came into focus lately, as they arise in the study of the cosmic censorship conjecture, in particular the (in-)extendibility of spacetimes \cite{Sbierski:18, Galloway_Ling:17, Graf_Ling:2018, GLS:18, GKS:19, Luk_Oh:19a, Luk_Oh:19b, Minguzzi_Suhr:19, Sbierski:20}.
\medskip

The regularity class of $\mathcal{C}^{1,1}$ for the Lorentzian metric turned out to mark a transition between the classical theory and low regularity phenomena. In this regularity, classical results concerning geodesics, causality theory and singularity theorems, remain valid, see \cite{KSS:14, Minguzzi:15, KSSV:14, KSSV:15, KSV:15, GGKS:18, Saemann_Steinbauer}, whereas for lower regularity differences occur \cite{Chrusciel_Grant:12, GKSS:20} and concepts diverge. In particular, the notions of \extremal curves and geodesics (solutions to the geodesic equations) no longer coincide.

If the metric is $\mathcal{C}^1$ then solutions to the geodesic equation still exist but might not be unique. Nevertheless, in the recent work \cite{Graf:20} Graf establishes the Hawking and Penrose singularity theorems in this regularity class and along the way shows that in a (globally hyperbolic) $\mathcal{C}^1$-spacetime any two causally related points are connected by a maximal $\mathcal{C}^2$-geodesic. Building on these results, Schinnerl \cite{Schinnerl:20} proved that in a $\mathcal{C}^1$-spacetime any maximal curve is of $\mathcal{C}^2$-regularity and possesses a parametrization solving the geodesic equation. The latter result of Graf and the one of Schinnerl also follow from our work.

Within  the low-regularity regime Lipschitz continuous Lorentzian metrics play an especially important role as they cover physically most relevant cases: e.g.\ spacetimes where there is a loss of regularity on a hypersurface such as shell-crossing singularities, thin shells of matter, and surface layers, cf.\ \cite{BH:03,SV:17, Lake:17}, or impulsive gravitational waves in the so-called Rosen form, see \cite[Ch.\ 20]{GP:09}, and especially \cite{PSSS:14, PSSS:16}, where the Filippov solution concept was employed to establish $\mathcal{C}^1$-regularity of geodesics.

Below this regularity many fundamental statements in causality theory finally cease to hold, e.g.\ the future of a point need not be open \cite{GKSS:20}, the push-up principle fails and, most surprisingly, there are H\"older spacetimes where the boundary of a lightcone has positive measure (so-called causal bubbles) \cite[Ex.\ 1.11]{Chrusciel_Grant:12}. On the other hand, several fundamental properties of the causality theory have been verified in continuous spacetimes, \cite{Chrusciel_Grant:12, Saemann:16, Minguzzi:19a}. Various other  questions concerning causality theory and properties of \extremal curves in Lipschitz continuous Lorentzian manifolds have been formulated in \cite{Saemann_Steinbauer}. The aim of this article is the resolution of most of  these questions.

\subsection{Statement of results}
All results discussed in the paper are of local nature, thus we will formulate them only for the case of globally hyperbolic Lorentzian manifolds (cf.\ \cite{Saemann:16}).

Given a globally hyperbolic Lorentzian manifold $(M,g)$ with a Lipschitz continuous metric, a causal (Lipschitz continuous) curve $\gamma:[a,b]\to M$ is called \extremal if it maximizes the Lorentzian length among all causal curves connecting its endpoints. Our main result for such curves reads as follows.
%Due to \cite[Thm.\ 1.1]{Graf_Lin:2018} such a curve is either lightlike (null) or timelike. 

\begin{thm} \label{thm: main}
Let $\gamma $ be a  \extremal curve in a Lipschitz continuous Lorentzian metric. Then $\gamma$ admits a parametrization making it  a $\mathcal C^{1,1}$-curve. Moreover, in this parametrization, $\gamma$ is a solution of the geodesic equation in the sense of Filippov.
\end{thm}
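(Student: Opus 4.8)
The plan is to prove first, for an $\alpha$-H\"older metric, that $\gamma$ is $\mathcal{C}^{1,\alpha/4}$ in a suitable parametrization, and then, when $\alpha=1$, to bootstrap this to $\mathcal{C}^{1,1}$ together with the geodesic equation. All statements being local, we may assume $\gamma$ lies in a single chart in which $g$ is $\mathcal{C}^{0,\alpha}$-close to a constant Minkowski metric. Fix an auxiliary Riemannian metric $h$ and parametrize $\gamma$ by $h$-arclength; then $\gamma$ is $1$-Lipschitz, differentiable a.e.\ with $|\dot\gamma|_h\equiv1$ and $g(\dot\gamma,\dot\gamma)\le0$ a.e. The first, and to my mind most characteristic, step is a quantitative comparison with Minkowski maximizers. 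Every sub-curve of $\gamma$ is again maximal, so for each parameter $s$ and each small scale $r$ the restriction $\gamma|_{[s-r,s+r]}$ maximizes $L_g$ among $g$-causal curves with the same endpoints, whence $L_g(\gamma|_{[s-r,s+r]})\ge L_g(\ell_{s,r})$ for the straight segment $\ell_{s,r}$ through those endpoints. Comparing $L_g$ with $L_{\eta_0}$, $\eta_0:=g(\gamma(s))$, on curves of $h$-length $O(r)$ one only gets the bound $|L_g-L_{\eta_0}|=O(r^{1+\alpha/2})$ — the loss of a factor $r^{\alpha/2}$ rather than $r^{\alpha}$ being forced by the square root in the length integrand, which is merely H\"older-$\tfrac12$ near lightlike directions. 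Hence $\gamma|_{[s-r,s+r]}$ is an almost-maximizer for the flat metric $\eta_0$ with deficit $O(r^{1+\alpha/2})$. The elementary, quadratically stable rigidity of straight segments in Minkowski space — write a near-maximal causal curve as a graph over the $\eta_0$-time axis and Taylor-expand $\sqrt{1-|\dot x|^2}$, with the analogous bound when the endpoints of $\gamma|_{[s-r,s+r]}$ are nearly $\eta_0$-lightlike — then gives $\sup_{[s-r,s+r]}\mathrm{dist}(\gamma,\ell_{s,r})\le C\,r^{1+\alpha/4}$. Telescoping this over dyadic scales produces the tangent $\dot\gamma(s)$ at every $s$ and the estimate $|\dot\gamma(s)-\dot\gamma(s')|\le C|s-s'|^{\alpha/4}$, so $\gamma$ is a $\mathcal{C}^{1,\alpha/4}$-curve; this proves the H\"older assertion.

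From now on $\alpha=1$, so $g$ is Lipschitz, $\partial g\in L^\infty$, and $\dot\gamma$ is continuous; put $U:=\{t:g(\dot\gamma,\dot\gamma)(t)<0\}$, an open set on which $\gamma$ is genuinely timelike. For $\mathcal{C}^1_c(U)$-variations $V$, the curves $\gamma_\varepsilon:=\gamma+\varepsilon V$ stay causal for small $\varepsilon$ and have the same endpoints as $\gamma$, so maximality forces $\tfrac{d}{d\varepsilon}\big|_0L_g(\gamma_\varepsilon)=0$; since $\partial g\in L^\infty$ and $\gamma,\dot\gamma$ are bounded, this first variation is computed by differentiating under the integral, and its vanishing says exactly that the momentum $p_\lambda(t):=g_{\lambda\nu}(\gamma(t))\dot\gamma^\nu(t)$ has distributional derivative $\tfrac12\,\partial_\lambda g_{\mu\nu}(\gamma)\dot\gamma^\mu\dot\gamma^\nu\in L^\infty$ on $U$. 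Hence $p_\lambda$ is Lipschitz there; inverting the (Lipschitz, nondegenerate) matrix $g_{\lambda\nu}(\gamma)$ shows $\dot\gamma$ is Lipschitz, i.e.\ $\gamma\in\mathcal{C}^{1,1}$ on $U$, and expanding $\dot p_\lambda$ identifies the resulting a.e.\ identity with $\ddot\gamma^\mu=-\Gamma^\mu_{\nu\rho}(\gamma)\dot\gamma^\nu\dot\gamma^\rho$. Substituting this into $\tfrac{d}{dt}g(\dot\gamma,\dot\gamma)$ makes the right-hand side vanish a.e., so $g(\dot\gamma,\dot\gamma)$ is constant on each component of $U$; by continuity no component can abut a point where $g(\dot\gamma,\dot\gamma)=0$, so either $U=(a,b)$ or $U=\emptyset$. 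This is the dichotomy: $\gamma$ is everywhere timelike or everywhere lightlike. In the timelike case $g(\dot\gamma,\dot\gamma)$ is a negative constant, so the $h$-arclength parametrization is proper time up to scaling, $\gamma\in\mathcal{C}^{1,1}$, and the displayed equation is the geodesic equation; that it holds in the Filippov sense (the convexified inclusion over the essential cluster values of the a.e.-defined Christoffels) follows from a measure-theoretic argument, or alternatively from realizing $\gamma$ as a $\mathcal{C}^{1,1}$-limit of smooth geodesics.

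There remains the lightlike case $U=\emptyset$, in which $L_g\equiv0$, $\gamma(b)\in\partial J^+(\gamma(a))$, and the length functional gives no first-variational information; this I expect to be the main obstacle. Here the plan is to approximate $g$ from the outside by smooth metrics $g_\delta$ with strictly wider light cones and uniformly bounded Christoffel symbols (mollifications of the Lipschitz $g$). Then $\gamma(b)\in I^+_{g_\delta}(\gamma(a))$, a $g_\delta$-maximal timelike geodesic $\sigma_\delta$ from $\gamma(a)$ to $\gamma(b)$ exists, and $\tau_{g_\delta}(\gamma(a),\gamma(b))\to\tau_g(\gamma(a),\gamma(b))=0$. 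Parametrizing each $\sigma_\delta$ affinely on $[0,1]$, the uniform bound on the $g_\delta$-Christoffels together with a uniform two-sided bound on $|\dot\sigma_\delta|_h$ (causal curves between fixed points in a compact region have controlled $h$-length) gives a uniform $\mathcal{C}^{1,1}$-bound, so along a subsequence $\sigma_\delta\to\sigma_*$ in $\mathcal{C}^1$ with $\sigma_*\in\mathcal{C}^{1,1}$; the limit $\sigma_*$ is $g$-lightlike (its $g$-norm is the limit of the $g_\delta$-norms, which tend to $0$), joins $\gamma(a)$ to $\gamma(b)$, and solves the Filippov geodesic equation for $g$, since along $\sigma_\delta$ the mollified Christoffels are uniformly bounded and their limits lie in the convexified inclusion. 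The remaining — and genuinely delicate — point is to conclude that $\sigma_*$ is a reparametrization of $\gamma$ itself; I expect this to require a local uniqueness/rigidity statement for maximal lightlike curves in a Lipschitz spacetime (e.g.\ via a push-up argument: a second maximal lightlike curve meeting $\gamma$ at $\gamma(a)$ and at an interior point but distinct from it produces a causal curve with a genuine corner whose two distinct null tangent directions sum to a timelike vector, so rounding the corner in a small ball yields a timelike curve from $\gamma(a)$ to $\gamma(b)$, contradicting $\gamma(b)\in\partial J^+(\gamma(a))$), or alternatively an analysis of the achronal boundary $\partial J^+(\gamma(a))$ and its generators. Granting this identification, $\gamma$ inherits the $\mathcal{C}^{1,1}$-parametrization and the Filippov geodesic equation from $\sigma_*$, which completes the proof.
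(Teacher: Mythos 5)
Your first step (H\"older regularity of the tangent via comparison of the curve with its chords in a frozen Minkowski metric) is essentially the paper's route to $\mathcal C^{1,\alpha/4}$, and your dichotomy ``everywhere timelike or everywhere lightlike'' is a correct intermediate target. But two of your steps have genuine gaps.

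First, the du Bois--Reymond argument on the timelike set. You claim the first variation ``is computed by differentiating under the integral'' and yields $\dot p_\lambda=\tfrac12\partial_\lambda g_{\mu\nu}(\gamma)\dot\gamma^\mu\dot\gamma^\nu\in L^\infty$. For a merely Lipschitz $g$ this right-hand side is not well defined: $\partial g$ exists only for a.e.\ $x$, and $\gamma$ (and each varied curve $\gamma+\varepsilon V$) may spend a set of times of positive measure in the null set where $g$ fails to be differentiable, so the chain rule $\tfrac{d}{d\varepsilon}g(\gamma+\varepsilon V)=\partial g(\gamma+\varepsilon V)\cdot V$ is not available for a single variation $V$. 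The same problem recurs when you differentiate $g(\dot\gamma,\dot\gamma)$ along the curve to get constancy of the speed. This is exactly the point the correct argument must address: one has to run the variation over a full-dimensional family of directions $w$ (so that for a.e.\ $w$ the varied curves meet the bad set only for a negligible set of times) and one can only conclude a Filippov/convexified differential inclusion, not a pointwise a.e.\ identity. Your conclusions ($p$ Lipschitz, hence $\gamma\in\mathcal C^{1,1}$ on the timelike set, and a Filippov inclusion) are correct, but the justification you give does not reach them; either a nonsmooth Euler--Lagrange theorem (Clarke-type) or the full-dimensional-family device is needed, and the ``measure-theoretic argument'' you defer to is the actual content of the step.

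Second, and more seriously, the lightlike case. Your scheme produces, by widening the cones and passing to the limit, \emph{some} $\mathcal C^{1,1}$ lightlike Filippov geodesic $\sigma_*$ from $\gamma(a)$ to $\gamma(b)$, but gives no reason why $\sigma_*$ should be a reparametrization of the given $\gamma$. The uniqueness you hope for is false: maximal lightlike curves between two causally related points need not be unique (the paper explicitly notes non-uniqueness of maximal curves for $\mathcal C^{0,1}$, indeed even $\mathcal C^{1,\alpha}$, metrics), and your push-up argument does not repair this --- two distinct null maximizers meeting only at their common endpoints have no corner anywhere, so no timelike curve is produced and no contradiction arises. The missing idea is to force the approximants through $\gamma$ itself: subdivide $\gamma$ into $m$ pieces with breakpoints \emph{on} $\gamma$, join consecutive breakpoints by tame (timelike-approximable, Filippov-geodesic) curves, observe that since $\mathcal L(\gamma)=0$ and $\gamma$ is maximal \emph{every} causal curve between its endpoints is maximal --- hence the concatenation is itself maximal, therefore $\mathcal C^1$, so the pieces fit together into a single Filippov geodesic --- and then let $m\to\infty$ and use stability of Filippov solutions under pointwise convergence. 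Without some such device that pins the limit to $\gamma$, your final step does not close.
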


If the Lorentzian metric is assumed to be $\mathcal C^1$ then  solutions of the  geodesic equation in the sense of Filippov  are exactly solutions of the geodesic equation in the classical sense and  are automatically $\mathcal C^2$. Thus, for $\mathcal C^1$-regular Lorentzian metrics our results recover \cite[Prop.\ 2.13]{Graf:20} and \cite{Schinnerl:20}.

The proof of Theorem \ref{thm: main} shows that for timelike \extremal curves, their parametrization with constant Lorentzian velocity satisfies the conclusion of Theorem \ref{thm: main},
thus provides a solution of the geodesic equation in the sense of Filippov. 

Due to a recent result by Graf and Ling (\cite[Thm.\ 1.1]{Graf_Ling:2018}) for a Lipschitz continuous Lorentzian metric any \extremal curve is either lightlike or timelike almost everywhere.
We strengthen this result,  see Proposition \ref{prop: final}, and provide an independent proof of it:
\begin{prop} \label{prop: graf}
Let $\gamma  :I\to M$ be a maximal curve for a Lipschitz continuous Lorentzian metric. Then $\gamma$ can be parametrized in a $\mathcal C^{1,1}$ way, and   either $\gamma'(t)$ is lightlike for all $t$, or
$\gamma'(t)$ is timelike for all $t$.
\end{prop}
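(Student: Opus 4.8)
The plan is to derive the whole statement from \tref{thm: main}. Its first assertion already supplies a parametrization in which $\gamma$ is a $\mathcal C^{1,1}$-curve; fix such a parametrization, in which moreover $\gamma$ solves the geodesic equation in the sense of Filippov. It then suffices to establish two facts: (i) the function $E(t):=g_{\gamma(t)}\big(\dot\gamma(t),\dot\gamma(t)\big)$ is constant along $\gamma$, and (ii) $\dot\gamma(t)\neq 0$ for every $t$. Granting these, causality of $\gamma$ forces $E\equiv c\le 0$; if $c<0$ then $\dot\gamma(t)$ is timelike for all $t$, while if $c=0$ then by (ii) $\dot\gamma(t)$ is a nonzero null vector for all $t$, i.e.\ $\gamma$ is lightlike everywhere. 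This is the asserted dichotomy, which in particular reproves the result of Graf--Ling.

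For (i), energy conservation, I would first try an approximation argument, which I expect to be the cleanest and to be compatible with the proof of \tref{thm: main}. Take smooth Lorentzian metrics $g_n\to g$ locally uniformly (mollifications); since $g$ is Lipschitz, the Christoffel symbols of the $g_n$ are locally uniformly bounded, hence the $g_n$-geodesics with the Cauchy data of $\gamma$ have locally uniformly bounded first and second derivatives, and by Arzel\`a--Ascoli a subsequence converges in $\mathcal C^1_{\mathrm{loc}}$ to a Filippov solution of the $g$-geodesic equation. Arranging the construction so that this limit is our parametrization $\gamma$, the pointwise identities $g_n(\dot\gamma_n,\dot\gamma_n)\equiv c_n$ pass to the limit for each $t$, so $E(t)=\lim_n c_n$ is independent of $t$. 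Alternatively, one argues directly: $E$ is locally Lipschitz and hence differentiable a.e., at a.e.\ $t$ one has $\ddot\gamma(t)=-\sigma(t)$ with $\sigma(t)$ in the Filippov slope field, and at a.e.\ such $t$ this field reduces to the single vector $\Gamma^i_{kl}(\gamma(t))\dot\gamma^k(t)\dot\gamma^l(t)$ while $g$ is differentiable at $\gamma(t)$; then
\[
E'(t)=\partial_k g_{ij}\,\dot\gamma^k\dot\gamma^i\dot\gamma^j+2g_{ij}\ddot\gamma^i\dot\gamma^j=\partial_k g_{ij}\,\dot\gamma^k\dot\gamma^i\dot\gamma^j-2g_{ij}\Gamma^i_{kl}\,\dot\gamma^k\dot\gamma^l\dot\gamma^j=0,
\]
using the symmetry $2g_{mi}\Gamma^i_{kl}=\partial_k g_{ml}+\partial_l g_{mk}-\partial_m g_{kl}$ and total symmetry of $\dot\gamma^k\dot\gamma^l\dot\gamma^m$; a locally Lipschitz function with vanishing a.e.-derivative is constant.

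For (ii): along the compact curve the Christoffel symbols are bounded, so the Filippov inclusion yields $|\ddot\gamma(t)|\le C|\dot\gamma(t)|^2\le C'|\dot\gamma(t)|$ for a.e.\ $t$; if $\dot\gamma(t_0)=0$, Gr\"onwall's inequality then forces $\dot\gamma\equiv 0$ on a neighbourhood of $t_0$, hence $\gamma$ constant there, contradicting that a \extremal curve is non-constant (alternatively, the parametrization produced by \tref{thm: main} is regular by construction). Combining (i) and (ii) gives the claim. I expect the only genuine obstacle to be the justification of (i) in this low regularity: in the approximation route one must verify that the relevant $\mathcal C^{1,1}$-parametrization is indeed obtained as a $\mathcal C^1$-limit of smooth geodesics (a delicate point if Filippov solutions fail to be unique), and in the direct route one must show that along $\gamma$ the Filippov slope field is a.e.\ single-valued --- an essential-continuity/Lebesgue-point statement for $\partial g$ --- and that $\gamma$ pulls null sets back to null sets, which holds precisely because $\dot\gamma$ is nowhere zero. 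Everything else is soft.
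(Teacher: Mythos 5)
Your plan inverts the paper's logical order: in the paper, Proposition~\ref{prop: graf} is a consequence of Proposition~\ref{prop: final}, which is proved in Section~\ref{sec:TL_ex_c} \emph{before} (and is used as a key ingredient in) the proof of Theorem~\ref{thm: main} in Section~\ref{sec:fin}. Concretely, the limiting argument that establishes Theorem~\ref{thm: main} for lightlike curves relies on knowing (from Proposition~\ref{prop: final}) that maximal curves of positive Lorentzian length are uniformly timelike and that those of zero length satisfy $|\gamma'|_g\equiv 0$. So one cannot take Theorem~\ref{thm: main} as a black box and feed it back to prove Proposition~\ref{prop: graf} without first giving an independent proof of Theorem~\ref{thm: main} --- which is exactly what the paper's Proposition~\ref{prop: final} supplies.

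Setting the circularity aside, there is a genuine gap in your step (i). Constancy of $E(t)=g_{\gamma(t)}(\dot\gamma,\dot\gamma)$ is \emph{not} an automatic consequence of $\gamma$ being a $\mathcal C^{1,1}$ Filippov geodesic for a merely Lipschitz metric. Your ``direct'' computation of $E'(t)$ needs two things a.e.\ in $t$: that $-\ddot\gamma(t)$ equals the single vector $\Gamma_{\gamma(t)}(\dot\gamma,\dot\gamma)$ (rather than merely lying in the essential convex hull built from \emph{nearby} values of $\Gamma$), and that $g$ is differentiable at the point $\gamma(t)$ so that the Koszul identity holds there. Rademacher gives a.e.\ differentiability of $g$ in $U\subset\R^n$, but the image of $\gamma$ is Lebesgue-null in $\R^n$, so nothing prevents $\gamma$ from spending positive $t$-measure in the bad set; the observation that $\dot\gamma\neq 0$ does not repair this, because that controls pull-backs of null sets in the $t$-parameter, not null sets in the $n$-dimensional domain. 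Your approximation route has the non-uniqueness obstruction you already flag. Notice that the paper itself never claims energy conservation as a property of Filippov geodesics: Proposition~\ref{prop: geodeq} takes constant Lorentzian speed as a \emph{hypothesis}, and Proposition~\ref{prop: final} obtains the dichotomy instead from the quantitative velocity bound \eqref{eq: velcont}, which is derived from Lemma~\ref{lem: fil} and the first-variation argument --- not from $E'\equiv 0$. Your step (ii), the Gr\"onwall argument ruling out $\dot\gamma(t_0)=0$, is sound, but it is the easier half; the substance of the dichotomy lies in (i), which as it stands is not established.

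Also note a small sign slip: with the paper's $(+,-,\dots,-)$ convention a causal vector has $g(v,v)\ge 0$, so the constant $c$ in your dichotomy should satisfy $c\ge 0$, not $c\le 0$.
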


We mention that the proof does not rely on the push-up principle, established for Lipschitz continuous Lorentzian metrics in \cite{Chrusciel_Grant:12}; in fact, the push-up principle can be easily deduced from Proposition \ref{prop: graf}. 

%For lightlike geodesics the conclusion is slightly weaker. We do not know if this conclusion is optimal.

%\begin{thm}
% Let $\gamma $ be a timelike  \extremal curve in   a Lipschitz continuous Lorentzian metric.  Then $\gamma$, parametrized  to have constant norm with respect to any sufficiently smooth background Riemannian metric, is 
%   is $\mathcal C^{1,\frac 1 2}$. 
%\end{thm}

\subsection{Additional comments} 
We would like to mention a few related statements. First, a direct adaptation of  the proof of Proposition \ref{prop:half_H_cont} reveals the following (probably non-optimal) statement:
\begin{prop}\label{prop_hoelder}
\Extremal curves for an $\alpha$-H\"{o}lder continuous Lorentz\-ian metric admit a $\mathcal C^{1,\frac{\alpha}{4}}$-parametrization.
\end{prop}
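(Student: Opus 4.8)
The plan is to re-run the argument behind Proposition~\ref{prop:half_H_cont}, keeping careful track of how each estimate depends on the modulus of continuity of the metric, and to observe that replacing the Lipschitz bound on $g$ by an $\alpha$-H\"older bound turns the gain in regularity into a H\"older gain with a correspondingly smaller exponent. Concretely, fix a base point $p=\gamma(t_0)$ and a chart around $p$ in which $g(p)$ equals the Minkowski metric $\eta$, so that $|g-\eta|\le Cr^{\alpha}$ on the coordinate ball of radius $r$. On a parameter window of size comparable to $r$ the curve stays inside a ball of comparable radius and has length of order $r$; hence for every causal curve $c$ with the same endpoints that stays in this ball one has $|L_g(c)-L_\eta(c)|\le Cr^{1+\alpha}$. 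Comparing $\gamma$ with the straight $\eta$-chord and using that $\gamma$ is $g$-maximal, one concludes that $\gamma$, restricted to the window, is an $O(r^{1+\alpha})$-almost maximizer for the flat metric $\eta$.

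The next step would convert this almost-maximality into quantitative control on $\gamma'$. Since for $\eta$ the length functional is concave along causal curves, a quantitative strict-concavity estimate bounds the mean square distance of $\gamma'$ from the chord direction over the window of size $r$ by a constant times $r^{\alpha}$, divided by the relevant modulus of concavity of the integrand $w\mapsto\sqrt{-\eta(w,w)}$ along $\gamma$. Running this at every base point over the dyadic scales $r,r/2,r/4,\dots$ is a Campanato/Morrey-type scheme that upgrades the decay of the mean square oscillation of $\gamma'$ to genuine H\"older continuity of $\gamma'$; choosing the parametrization produced by this analysis then gives a $\mathcal C^{1,\beta}$-curve for the resulting exponent $\beta$.

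I expect the main obstacle --- and the reason the output exponent degrades to $\alpha/4$, which is why the statement is flagged as probably non-optimal --- to be the behaviour near the light cone. For an a priori merely causal curve the modulus of concavity of the flat length integrand degenerates as $\gamma'$ approaches a null direction, and the clean dichotomy ``everywhere timelike or everywhere lightlike'', available for Lipschitz metrics through Proposition~\ref{prop: graf}, is not yet at our disposal here; one therefore has to work with a quantitatively weaker concavity estimate, and to treat the lightlike regime by comparison with null geodesics of the frozen metric rather than with a timelike chord. Making the constants in the dyadic iteration uniform in the base point, so that the Campanato scheme actually closes, is the remaining point where the adaptation of Proposition~\ref{prop:half_H_cont} has to be carried out with care.
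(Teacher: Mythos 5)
Your overall strategy --- re-run the proof of Proposition~\ref{prop:half_H_cont} keeping track of the modulus of continuity of $g$ --- is exactly what the paper does: one redefines the auxiliary metric as $g^h(v,w)=\langle v,w\rangle+4Lh^{\alpha}v_tw_t$ and replaces the exponent $\frac14$ by $\frac{\alpha}{4}$ in \eqref{eq:7}. But your sketch has concrete gaps. First, the length-comparison estimate you assert, $|L_g(c)-L_\eta(c)|\le Cr^{1+\alpha}$, is false for curves that are merely causal: one is comparing square roots of quantities that may vanish, so the correct bound (Lemma~\ref{lem:yaman}) is $r\cdot\sqrt{Cr^{\alpha}}\sim r^{1+\alpha/2}$. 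This square root accounts for half of the degradation from $\alpha$ to $\alpha/4$ (the other half comes from the gain in the quantitative reversed triangle inequality being quadratic in the deviation $D$); with your claimed estimate you would be proving $C^{1,\alpha/2}$, and the estimate cannot be repaired without restricting to uniformly timelike curves. Second, you never address the fact that the Euclidean chord need not be $g$-causal, so the $g$-maximality of $\gamma$ cannot be played off against it directly; the paper resolves this by widening the light cone (the metric $g^h$ above), checking that every $g$-causal curve in the $h$-ball is $g^h$-causal, and then showing that a large deviation $D$ forces the chord to be uniformly $g$-timelike after all.

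Third, the ``quantitative strict-concavity estimate'' you invoke is precisely the quantitative reversed triangle inequality of Lemma~\ref{lem:cont_rev_tria_ineq} and Corollary~\ref{cor:geom}, which is one of the paper's new results and cannot be taken for granted. Note that it holds with a universal constant $A=\frac{1}{10}$ for all causal vectors, so the degeneration near the light cone you worry about does not occur at that step: the gain is $A\,D^2/|u+v|$, the denominator is bounded above by the Euclidean data, and the smallness of $|u+v|$ near the cone actually works in one's favour when showing the chord is timelike. Your Campanato/Morrey iteration is a legitimate substitute for the paper's chord criterion (Lemma~\ref{lem:geo_diff_crit}), but as written the proposal is a plan rather than a proof, and the wrong length comparison together with the unaddressed causality of the comparison curve means the argument does not close.
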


%\extremal curves for an $\alpha$-H\"{o}lder continuous Lorentzian metric admit a $\mathcal C^{1,\alpha/4}$-parametrization.

The second comment is concerned with the optimality of the $\mathcal C^{1,1}$-regularity in Theorem \ref{thm: main}, whose unclear status was pointed out in \cite{Saemann_Steinbauer}. In a fixed chart, in which the metric is Lipschitz continuous, the \extremal curves do not have to be of class $\mathcal C^2$.  Indeed, we can start with the Minkowski metric  $g_0$ 
on $\R^2$ and pull it back  by a $\mathcal C^{1,1}$-diffeomorphism 
$\phi :\R^2\to \R^2$ such that the preimage $\gamma ^{\ast}:= \phi^{-1} (\gamma)$ of the $y$-axes $\gamma$ is not $\mathcal C^2$.  Then, $\gamma ^{\ast}$ is a \extremal curve in the metric $\phi^{\ast} (g_0)$, and $\gamma^*$ is not $\mathcal C^{2}$.    

The following question also surely has a negative answer, but a proof would require some considerable amount of new analytic ideas, similar to \cite{Kazdan}, \cite{Sabitov} in low regularity: Given a  
	$\mathcal C^{0,1}$-Lorentzian metric $g$ on $U\subset \R^n$, does there exist
some coordinates around any point, in which the metric is still $\mathcal C^{0,1}$ and all \extremal curves are of class $\mathcal C^2$?

The third remark concerns the regularity of solutions of the geodesic equation  in the sense of Filippov, see \cite{Filippov:1988}, \cite{Saemann_Steinbauer} and Section \ref{sec: Filip} for a discussion of the corresponding concepts.
The following general statement shows that the second part of Theorem \ref{thm: main} implies the first and thereby also answers a question posed in \cite{Saemann_Steinbauer}.
%For any Lorentzian  metric $g$  of class $\mathcal C^{0,1}$ on an open domain 
%$U\subset \R^n$, the  Chritoffel symbols $\Gamma :U\to \R^{ n} $
%exists almost everywhere and is an $L^{\infty} _{loc}$ map. 
%The Filippov geodesics are solutions of the equation
%$$\gamma '' +\Gamma (\gamma ', \gamma ') =0$$
%in the sense of Filippov, see ....
%We have now the simple:
\begin{prop} \label{prop: Filippov}
For any  Lorentzian metric $g$  of class $\mathcal C^{0,1}$ on an open domain 
$U\subset \R^n$, any  solution $\gamma :[0,1]\to U$ of the geodesic equation in the sense of Filippov 
 is of class $\mathcal C^{1,1}$.
% Moreover, the $\mathcal C^{1,1}$-norm of $\gamma$ is bounded on any compact sub-domain of $U$ in terms of the Lipschitz constant of $g$ and the upper bound of velocity $\|\gamma'\|$ in an underlying Euclidean metric.
\end{prop}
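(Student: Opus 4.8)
The plan is to unwind the Filippov notion to a first-order differential inclusion and then exploit that, for a Lipschitz metric, the right-hand side of the geodesic equation is locally bounded. Concretely, I would write the geodesic equation
\[
\ddot\gamma^{\,k} + \Gamma^k_{ij}(\gamma)\,\dot\gamma^{\,i}\dot\gamma^{\,j} = 0
\]
as a first-order system on $U\times\R^n$ for $z=(\gamma,w)$, namely $\dot z = \Phi(z)$ with
\[
\Phi(x,v) = \bigl(v,\,F(x,v)\bigr), \qquad F(x,v)^k = -\Gamma^k_{ij}(x)\,v^iv^j .
\]
Then, following the discussion in Section~\ref{sec: Filip}, a solution in the sense of Filippov is an absolutely continuous $z:[0,1]\to U\times\R^n$ with $\dot z(t)\in\mathcal K[\Phi](z(t))$ for a.e.\ $t$, where $\mathcal K[\Phi]$ denotes the Filippov set-valued regularization of $\Phi$. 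Since the $v$-component of $\Phi$ is continuous, the inclusion forces $\dot\gamma=w$ a.e.; as $z$ is absolutely continuous this gives $\gamma\in\mathcal C^1$ with $\dot\gamma=w$ absolutely continuous, so that $\ddot\gamma$ exists a.e.\ and it suffices to bound it.

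Next I would observe that the Christoffel symbols are locally bounded. Indeed, $g\in\mathcal C^{0,1}$ and non-degenerate implies that $g^{-1}$ is again locally Lipschitz (on a compact set $|\det g|$ is bounded away from $0$) and $\partial g\in L^\infty_{\mathrm{loc}}$, whence $\Gamma^k_{ij}=\tfrac12 g^{kl}(\partial_ig_{jl}+\partial_jg_{il}-\partial_lg_{ij})\in L^\infty_{\mathrm{loc}}(U)$. Consequently $\Phi$ is bounded on every set $K\times\overline{B_R(0)}$ with $K\Subset U$ and $R>0$, say $|\Phi|\le C=C(K,R)$ a.e.\ there. The passage to the Filippov regularization does not destroy this bound: $\mathcal K[\Phi](x,v)$ is contained in the closed convex hull of the essential range of $\Phi$ over any ball about $(x,v)$, hence it is still contained in $\overline{B_C(0)}$ whenever a small ball around $(x,v)$ stays inside $K\times\overline{B_R(0)}$.

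Finally I would invoke compactness of the interval. The curve $z=(\gamma,\dot\gamma)$ is continuous on $[0,1]$, so its image is a compact subset of the open set $U\times\R^n$; choosing $K,R$ so that a uniform neighbourhood of this image lies in $K\times\overline{B_R(0)}$, we get $\dot z(t)\in\mathcal K[\Phi](z(t))\subseteq\overline{B_C(0)}$ for a.e.\ $t$, in particular $|\ddot\gamma(t)|\le C$ a.e.\ on $[0,1]$. Thus $\dot\gamma$ is $C$-Lipschitz, i.e.\ $\gamma\in\mathcal C^{1,1}$.

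I do not expect a genuine analytic obstacle here: the argument is the elementary principle that a Filippov solution of a differential inclusion with locally bounded right-hand side automatically has locally bounded velocity, applied to the geodesic flow of a $\mathcal C^{0,1}$-metric. The points demanding a little care are: extracting the a priori $\mathcal C^1$-regularity of $\gamma$ from the first-order inclusion (so that $\ddot\gamma$ makes sense a.e.), checking that the Filippov regularization preserves the \emph{local} bound rather than worsening it, and turning this into a \emph{uniform} bound along $\gamma$ via compactness of $[0,1]$. One also tacitly uses that $v\mapsto\Phi(x,v)$ is continuous, so that regularizing jointly in $(x,v)$ or only in the discontinuous variable $x$ yields the same inclusion.
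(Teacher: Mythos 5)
Your proposal is correct and follows essentially the same route as the paper: the Christoffel symbols of a Lipschitz metric are locally bounded, the Filippov convexification preserves this bound, and since $\gamma'$ is absolutely continuous (hence bounded on $[0,1]$) one gets $\|\gamma''\|\leq C$ a.e., so $\gamma'$ is Lipschitz. The only cosmetic difference is that you rederive the a priori regularity of $\gamma'$ from the first-order Filippov system, whereas the paper's definition of a Filippov geodesic already builds in absolute continuity of $\gamma'$.
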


Moreover, the $\mathcal C^{1,1}$-norm of $\gamma$ is bounded  in terms of the $\mathcal C^{0,1}$-norm of $g$ and the   Euclidean length of $\gamma$. The proof of this proposition is a direct consequence of a classical result by Filippov \cite[Thm.\ 7.8]{Filippov:1988} and holds true for any pseudo-Riemannian metric $g$ (see also \cite{Steinbauer:14}).

The first half of Theorem \ref{thm: main} is the analog of the corresponding regularity statement for shortest curves in Riemannian metrics, \cite{Yaman}, and its proof  follows the ideas in  \cite{Yaman}.  The proof that \extremal curves in $\mathcal C^{0,1}$-Lorentzian metrics are Filippov-geodesics is an extension of the classical "first variation" argument  and   transfers literally to the Riemannian case (even with some technical issues disappearing). It shows that any shortest curve in a Riemannian $\mathcal C^{0,1}$-metric solves the geodesic equation in the sense of Filippov.

Note, however, that solutions of the geodesic equation in the sense of Filippov are neither uniquely defined nor have any extremality properties, even in the Riemannian case, unless the regularity of the metric $g$ is assumed to be $\mathcal C^{1,1}$,
see \cite{Hartman:50, Hartman_Wintner:51}. In particular, solutions to the geodesic equations need not be \extremal (on any subinterval) nor unique and \extremal curves need not be unique, neither. 

%We mention a subtlety:  From the definition it does not follow directly (and probably is not true at all) that the condition to be a solution of the Filippov geodesic equation is invariant under $\mathcal C^{1,1}$ changes of coordinates.   Therefore, the precise formulation of the corresponding statement in Theorem \ref{thm: main} should reads as follows: Any \extremal curve  is a solution of the geodesic equation in the sense of Filippov in \emph{some} 
%$\mathcal C^{1,1}$ chart. Probably, with a bit more effort the word \emph{some} can be improved to \emph{any}.

It seems to be a challenging question to understand the geodesic flow  in Lorentzian (or Riemannian) metrics of low regularity, cf.	 \cite{Ambrosio}, \cite{KLP}. Finally, it appears to be possible to use our Theorem \ref{thm: main} to extend the singularity theorem of Hawking to the setting of $\mathcal C^{0,1}$-Lorentzian metrics (this will be explained in a forthcoming article).

%The first half of Theorem \ref{thm: main} is the analog of the corresponding regularity statement for shortest curves in Riemannian metrics, \cite{Yaman}. The proof of Theorem \ref{thm: main} follows the ideas in  \cite{Yaman}.

\subsection{Structure of the text}
After introducing notations and conventions in Section \ref{sec:not}, we discuss properties of solutions of geodesic equations in the sense of Filippov,  prove Proposition \ref{prop: Filippov} and show that the class of Filippov geodesics is stable under pointwise convergence in Section \ref{sec: Filip}. 
In Section \ref{sec:var} we show that a maximal curve solves the geodesic equation, if it is already known to be 
 timelike, $\mathcal C^{1,1}$  and  parametrized with constant Lorentzian velocity.
 
 In Section \ref{sec: alpha} we state a version of a result of \cite{Calabi-Hartman}, \cite{Yaman} showing that the property of (not) being a $\mathcal C^{1,\alpha}$-curve in a Euclidean space can be characterized in terms of the deviations of the curve from the chords connecting points on the curve.

 In Section \ref{sec: triang} we provide a quantitative version of the triangle inequality in Minkowski space, which to the best of our knowledge seems be a completely new result. 
 
 In the next  sections we fix a chart and 
  show that all \extremal curves in this chart are $\mathcal C^{1,1}$. The basic idea, going  back to the Riemannian situation analyzed in
 \cite{Calabi-Hartman} and \cite{Yaman}, is that if the \extremal curve were too far from 
 a chord, then the chord would have longer Lorentzian length than the \extremal curve.  One of the arising difficulties is that, a priori, the Euclidean chord 
 does not need to be causal.

 In Section \ref{sec: 1-2}  we prove    Proposition \ref{prop_hoelder}, relying on the quantitative triangle inequality. Here we compare the Lorentzian length of the curve and the chord with respect to the Minkowski metric of a fixed point on the chord, and then estimate the error terms arising from the non-constancy of the Lorentzian metric. Also in the Riemannian case, this estimate is too coarse to obtain the optimal $\mathcal C^{1,1}$-regularity, for Lipschitz continuous metrics. However, for the rest of the argument we actually only need that maximal curves are $\mathcal C^{1}$.

 In Section \ref{sec: 1-1} we  consider \extremal curves  whose tangent vectors are timelike everywhere.
 %Already knowing that they are $\mathcal C^{1} $ all short chords are timelike as well. Under the additional assumption that they are uniformly timelike, i.e. bounded away from the boundary of the causal cone, 
 We compare the lengths of the  curve and its chords by estimating the differences of Lorentzian velocities with respect to 
 varying  Minkowski metrics. Also this part follows the Riemannian case in \cite{Yaman}, and supplies us with the $\mathcal C^{1,1}$-regularity for uniformly timelike \extremal curves.

In Section \ref{sec:TL_ex_c} we prove Proposition \ref{prop: graf} and finish the proof of the main theorem for timelike maximal curves.
% a quantified version in the Lipschitz setting of a result of Graf which provides us with sufficient control on timelike \extremal curves to deduce that they are in fact uniformly timelike on all compact intervals. This completes the proof of our main result for timelike \extremal curves.
 
 In the final section \ref{sec:fin} we show that any lightlike \extremal curve can be obtained as a limit of \extremal curves which solve the geodesic equation. By stability, it shows that any \extremal curve solves the geodesic equation in the sense of Filippov and an application of Proposition \ref{prop: Filippov} then finishes the proof of the main theorem.
 
\section{Notation and conventions} \label{sec:not}
 We use the convention that a Lorentzian metric $g$ has signature\\$+---\ldots$ and a vector $v\in T_p M$ is  \emph{causal} if $g_p(v,v)\geq 0$ ($v\neq 0$), \emph{timelike} if $g_p(v,v)>0$, \emph{null} or \emph{lightlike} if $g_p(v,v)=0$ ($v\neq 0$) and \emph{spacelike} if $g_p(v,v)<0$ or $v=0$. Corresponding Lorentzian norms are denoted as $|v|_p:=\sqrt{|g_p(v,v)|}$. A locally Lipschitz continuous curve $\gamma\colon I\rightarrow M$ is \emph{timelike/causal/lightlike} if $\dot\gamma$ is timelike/causal/lightlike almost everywhere.
 
  	On the Euclidean space $\R^n$ we denote by  $\| \cdot \|$ the Euclidean norm.   We denote the standard Minkowski product on $\R^n$ as $\left\langle \cdot, \cdot \right\rangle$,
  	$$\left\langle u,v \right\rangle := u_tv_t-u^T_xv_x \;,$$
  	where we denote for a vector $w\in \R^n$ by $w_t \in \R$ its first  and by 
  	$w_x \in \R^{n-1}$ its last $(n-1)$ coordinates. The Euclidean scalar product will never appear in this text. Otherwise we follow the conventions used in \cite{Chrusciel_Grant:12,GKSS:20}.

  	 A Lorentzian $\mathcal C^{0,1}$-manifold denotes a smooth manifold $M$ with a fixed smooth atlas and a Lorentzian metric $g$ whose coordinates in any chart are Lipschitz continuous.
  	
  	Most of the time we work locally on an open set $U\subset \R^n$. % In this setting we assume throughout that our Lorentzian metric $g$ is Lipschitz continuous. 
  	In such a chart we denote the  Lipschitz constant  of $g$ as $L$:
  	% i.e. we then have
 $$|g_x(v,w)-g_y(v,w)| \leq L\cdot \|x-y\| \;,$$
for all unit vectors $v,w\in \R^n$ and all $x,y\in U$.

%Our Lorentzian  manifold $M$ and therefore also all charts will always be assumed globally hyperbolic (\cite{}).  By $\mathcal L(\gamma) =\mathcal L_g (\gamma) $ we denote the Lorentzian length of a causal curve $\gamma :I\to M$:
%$$\mathcal L(\gamma) =\int _I   \sqrt{g(\gamma '(t), \gamma '(t))} \; dt \;.$$

By making the chart smaller and the Lipschitz constant larger, if needed, we can always assume that for any point $x\in U$ the chart  $U$ can be changed, so that $g_x$ coincides with the standard Minkowski product   $\left\langle \cdot, \cdot \right\rangle$,  (while keeping the Lipschitz constant $L$ for $g$).

 Making  the chart $U$ smaller  we can   assume that the first coordinate vector $T$ is future directed timelike. More precisely,  we may assume  that the first coordinate $t(u) :=u_t$ as function on $\R^n$ grows with velocity at least $\frac 1 2$ on any future directed causal curve $\gamma:I\to U$ parametrized by Euclidean arclength, i.e.
 $$(t\circ \gamma ) ' \geq \frac 1 2 $$
almost everywhere. In particular, any causal curve $\gamma$ in $U$ parametrized by arclength is a bilipschitz curve with bilipschitz constant $2$. Our Lorentzian  manifold $M$ and therefore also all charts will always be assumed globally hyperbolic (cf.\ the proof of \cite[Thm.\ 2.2]{Saemann_Steinbauer}).

%Our manifold $M$ and therefore also all charts will always be assumed globally hyperbolic (\cite{}).

 %We denote the standard Minkowski product on $\R^n$ as $\left\langle \cdot, \cdot \right\rangle$,
 %$$\left\langle u,v \right\rangle := u_tv_t-u^T_xv_x \;,$$
 %where we denote for a vector $w\in \R^n$ by $w_t \in \R$ its first  and by 
 %$w_x \in \R^{n-1}$ its last $(n-1)$ coordinates.
 
  %we combine the triangle inequality and the characterization of $C^{1\alpha}$ curves, to prove that all \extremal curves are $\mathcal C^{1,\frac 1 2}$.  The idea is taken from \cite{Calabi-Hartman}: if the curve is not $\mathcal C^{1,\frac 1 2 }$  we show that chords are still
 %causal curves and have 
 %

% The proof of the fact that \extremal curves solve the geodesic equation applies to the Riemannian situation without changes showing,
%that shortest curves in Riemannian $\mathcal C^{0,1}$ metrics are solutions of
%a geodesic equation in the sense of Filippov. 

 By $\mathcal L(\gamma) =\mathcal L_g (\gamma) $ we denote the Lorentzian length of a causal curve $\gamma :I\to M$:
 $$\mathcal L(\gamma) =\int _I   \sqrt{g(\gamma '(t), \gamma '(t))} \; dt \;.$$
 
 A causal curve $\gamma \colon I\rightarrow M$ is called \extremal if its Lorentz\-ian length is maximal among all causal curves with the same endpoints. Any subcurve of any \extremal curve is maximal.
 
\section{Filippov geodesics} \label{sec: Filip}
\subsection{Filippov geodesics, their regularity and stability}\label{sub:Filippov_regularity}
Let $M$ be a Lorentzian manifold with Lorentzian metric $g$ of class $C^{0,1}$.
	% i.e., locally Lipschtiz continuous. 
Then there exists a unique Levi-Civita connection
$\nabla ^g$ on $M$. This connection assigns to any pair of locally Lipschitz vector fields $X,Y$ a locally bounded vector field $\nabla ^g_X Y$ so that 
the usual rules (torsion freeness and metric property) are satisfied, see, for instance, \cite{Honda}.

In any chart $U\subset \R^n$ the Levi--Civita connection differs from the directional derivative $DY(X)$ by a tensor $\Gamma$, called the Christoffel symbol, which is a bounded symmetric $(1,2)$-tensor on $U$.

The usual Koszul formula provides a formula for  $\Gamma$ via the partial derivatives of $g$, which exist almost everywhere on $U$. More precisely, the value of $\Gamma$ is defined in all points in which $g$ is differentiable.

%The aim of this section is to show that a   \extremal curve in a Lipschitz continuous Lorentzian metric with a $\mathcal C^{1,1}$ parametrization solves the geodesic equation in the sense of Filippov.  

%We recall the notion of a Filippov geodesic in a chart $U$.  %For an open subset $U \subset \R^n$ let $g$ be a Lorentzian metric on $U$.   
Let $U$ be a chart as in Section \ref{sec:not}.  We say that a curve 
$\gamma :I\to U$ is a  (Filippov-) geodesic if $\gamma'$ exists almost everywhere, is absolutely continuous, and $\gamma$ is a solution of the  differential equation 
$$\gamma''(t) + \Gamma_{\gamma(t)} (\gamma'(t), \gamma'(t)) =0  \;$$
in the sense of differential inclusions of Filippov \cite{Filippov:1988}, \cite{Saemann_Steinbauer}. This is the case if for almost every $t \in I$ the point $-\gamma''(t)$ is contained in the essential convex hull
\[
			\hat\Gamma_{\gamma(t)}(\gamma'(t), \gamma'(t)) := \bigcap_{\delta>0} \bigcap_{\mu(N)=0} %\mathrm{co}
			 \mathcal K (\delta, N),
%			
%			\left( f(B((\gamma(t),\gamma'(t)),\delta)\backslash N) \right),
\]
where  $\mu$ is the Lebesgue measure on $\R^n\times \R^ n$ and 
$\mathcal K(\delta, N)$   is the closed convex hull
\begin{align*}
 \mathcal K(\delta, N) &:=\mathrm {co} \{\Gamma _x (w,w) \; :  \; (x,w)\in (U \times \R^n) \setminus N \; ; \;\\
 &\qquad\quad\|(x,w)-(\gamma(t), \gamma'(t))\|<\delta \} \,.
\end{align*}

Due to the continuity of $\Gamma _x(v,v)$ in the second variable $v$ for almost all $x\in U$, it suffices in the definition of the essential convex hull
$\hat \Gamma_ {\gamma} (\gamma', \gamma')$  to consider  subsets $N$ with $\mu (N)=0$ of the form $N=N_0\times \R^n$.

The notion is invariant under smooth changes of coordinates and therefore, we can unambiguously talk about Filippov-geodesics in the Lorentzian manifold $M$, by requiring that the intersection of the curve with any chart is a Filippov-geodesic in this chart.

\begin{rem}
Filippov-geodesics are invariant under $\mathcal C^2$-changes of coordinates. However, it is not clear to us if Filippov geodesics are invariant under $\mathcal C^{1,1}$-changes of coordinates, the most natural class in the context of $\mathcal C^{0,1}$-metrics.
\end{rem}

We can now provide:
\begin{proof}[Proof of Proposition \ref{prop: Filippov}]
The statement is local, thus we may assume that the neighborhood $U$ is small enough as in Section 2, in particular, $g$ is globally Lipschitz continuous on $U$.
Let $\gamma: [0,1] \to U$ be a solution of the geodesic equation in the sense of Filippov.  Then $\gamma'$ is absolutely continuous, in particular, 
$\|\gamma ' (t)\|$ is bounded by some constant $C_1$.

For almost every $x\in U$ the Christoffel symbol $\Gamma_x  $ has a norm (as a bilinear map) bounded
  by a constant $C_2$ depending  on the Lipschitz norm of $g$.  Hence, for almost all $x\in U$ and  all $t\in [0,1]$ we have  
  $$\|\Gamma _x (\gamma '(t), \gamma ' (t))\|\leq C_1^2\cdot C_2 \;.$$
  Thus the same bound is true for any convex hull of vectors of the form
  $\Gamma _x (\gamma '(t), \gamma ' (t)) $. The definition of being a solution of the geodesic equation now implies for almost all $t\in [0,1]$
$$\|\gamma'' (t) \| \leq C_2\cdot C_1 ^2 \;.$$
Thus $\gamma '(t)$ is Lipschitz continuous and $\gamma $ is of class $\mathcal C^{1,1}$.
\end{proof}

As the proof shows,  the $\mathcal C^{1,1}$-norm of $\gamma$ depends only
on the Lipschitz constant of $g$ and an upper bound on the velocity $\|\gamma '\|$ of $\gamma$.
Thus, due to the next observation, the  $\mathcal C^{1,1}$-norm is even controlled by the length:

\begin{lem} \label{lem: fil}
 Let  $\gamma : [a,b] \to U$ be a solution of the geodesic equation in the sense of Filippov.
 Then, for all $t\in [a,b]$, we have
 $$\|\gamma' (t)\| \leq  \frac {e^{Cr}-1} {C}   \cdot \frac 1 {b-a} \;,$$
 where $C$ is some constant depending only on  $U$ and where $r$ denotes the Euclidean length of $\gamma$.  
\end{lem}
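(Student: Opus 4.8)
The plan is to reduce the statement to a one-dimensional differential inequality for the Euclidean speed of $\gamma$ and to analyze that inequality after reparametrizing by Euclidean arclength. Write $v(t):=\|\gamma'(t)\|$. Exactly as in the proof of \pref{prop: Filippov}, the Filippov equation together with the a.e.\ bound $\|\Gamma_x\|\le C$ on the Christoffel symbol (with $C$ depending only on $U$, namely on the Lipschitz constant of $g$) yields $\|\gamma''(t)\|\le C\,v(t)^2$ for a.e.\ $t$. Since $\gamma'$ is absolutely continuous and $\|\cdot\|$ is $1$-Lipschitz, $v$ is absolutely continuous with $|v'(t)|\le\|\gamma''(t)\|\le C\,v(t)^2$ a.e. Thus everything reduces to the following assertion: if $v\colon[a,b]\to[0,\infty)$ is absolutely continuous with $|v'|\le C v^2$ a.e.\ and $\int_a^b v = r$, then $\max_{[a,b]} v\le \frac{e^{Cr}-1}{C}\cdot\frac1{b-a}$.

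First I would dispose of the degenerate case. If $v(t_0)=0$ for some $t_0$, then, since $v$ is continuous hence bounded, applying Gronwall's inequality on both sides of $t_0$ to $v(t)\le C\bigl|\int_{t_0}^t v(\sigma)^2\,d\sigma\bigr|$ forces $v\equiv0$ near $t_0$; hence $\{v=0\}$ is open and closed in $[a,b]$, so $v\equiv0$, $\gamma$ is constant, $r=0$, and the inequality holds trivially. Therefore I may assume $v>0$ on $[a,b]$; then $v$ is Lipschitz (absolutely continuous with $|v'|\le C(\max v)^2<\infty$) and bounded below by a positive constant.

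Next I would reparametrize by arclength. Set $w(t):=\int_a^t v$, a $\mathcal C^1$ increasing bijection $[a,b]\to[0,r]$ with $\mathcal C^1$ inverse $\tau$, and put $u(s):=v(\tau(s))$ for $s\in[0,r]$. Then $u$ is Lipschitz and $\frac{d}{ds}\log u(s) = \frac{v'(\tau(s))}{u(s)^2}$ a.e., so $\bigl|\frac{d}{ds}\log u\bigr|\le C$ a.e.; integrating gives $u(s)\ge u(s_1)\,e^{-C|s-s_1|}$ for all $s,s_1\in[0,r]$. Choosing $s_1$ with $u(s_1)=U:=\max_{[0,r]} u=\max_{[a,b]}v$ and using $b-a=\int_0^r \frac{ds}{u(s)}$, we obtain
\[
 b-a \;\le\; \frac1U\int_0^r e^{C|s-s_1|}\,ds \;=\; \frac1U\cdot\frac{e^{Cs_1}+e^{C(r-s_1)}-2}{C}\,.
\]
Since $s_1\mapsto e^{Cs_1}+e^{C(r-s_1)}$ is convex on $[0,r]$ it is maximal at an endpoint, where it equals $1+e^{Cr}$; hence the right-hand side is at most $\frac1U\cdot\frac{e^{Cr}-1}{C}$, which rearranges to $U\le \frac{e^{Cr}-1}{C(b-a)}$, as claimed.

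The reductions in the first two paragraphs are routine. The one point requiring a little care is the passage to the arclength parameter: one needs $v$ to be absolutely continuous \emph{and} bounded away from $0$ in order to justify the chain rule for $\log u$, which is precisely why the degenerate case is isolated beforehand. The only other mild subtlety is to extract the sharp constant $\frac{e^{Cr}-1}{C}$ rather than the weaker $r e^{Cr}$ that a naive Gronwall estimate would give; this is exactly what the elementary convexity (endpoint) bound for $\int_0^r e^{C|s-s_1|}\,ds$ accomplishes.
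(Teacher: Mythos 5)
Your proof is correct and yields the same sharp constant, but it takes a genuinely different route from the paper's. The paper works directly in the time parameter: setting $s(t)=\|\gamma'(t)\|$, it observes that $|s'|\le Cs^2$ is equivalent on $\{s>0\}$ to $|(1/s)'|\le C$; this Lipschitz bound on $1/s$ immediately gives the dichotomy $s\equiv 0$ or $s>0$ everywhere (a zero of $s$ would force $1/s$ to blow up on the positivity set, contradicting its Lipschitz continuity), and then, with $\epsilon:=1/\max s$ attained at $t_0$, one lower-bounds $r=\int_a^b s(t)\,dt\ge\int_a^b\frac{dt}{\epsilon+C|t-t_0|}\ge\frac1C\log\bigl(1+\tfrac{C(b-a)}{\epsilon}\bigr)$, taking the worst position of $t_0$ at an endpoint. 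You instead reparametrize by Euclidean arclength $s\in[0,r]$ and note that $\log u$ is $C$-Lipschitz in the new parameter, then upper-bound $b-a=\int_0^r ds/u(s)$, taking the worst position of the maximizer $s_1$ at an endpoint by convexity of $e^{Cs_1}+e^{C(r-s_1)}$. The two arguments are dual to one another: one bounds $r$ from below in terms of $b-a$ and $\max s$, the other bounds $b-a$ from above in terms of $r$ and $\max u$, and the two ``worst case at the endpoint'' steps are exact counterparts. Your treatment of the degenerate case via Gronwall and clopenness of $\{v=0\}$ is also a bit more laborious than the paper's one-line observation, though of course equally valid. Overall the reparametrization does not buy anything beyond perhaps a more transparent bookkeeping of the constant; the paper's reciprocal trick achieves the same in the original variable.
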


\begin{proof}
By the form of the geodesic equation, the $\mathcal C^{1,1}$-curve $\gamma$ satisfies for almost all $t\in [a,b]$  the differential inequality $$\|\gamma''(t)\| \leq C\cdot \|\gamma '(t)\|^2 \;,$$
where the constant  $C>0$ depends only on the Lipschitz constant of the Lorentzian metric $g$.	

Thus,  the Lipschitz continuous non-negative function  $s(t):= \|\gamma '(t)\|$ satisfies almost everywhere
$$|s'(t)| \leq C\cdot s^2  (t)\;.$$
If $s(t)\neq 0$, this is equivalent  to 
$$\left|\left(\frac 1 {s(t)} \right) '\right| \leq C \;.$$
Thus, on the open set of points $t$ at which $s$ is positive, the functions
$\frac 1 {s(t)}$ is $C$-Lipschitz continuous.  By continuity, either $s$ is constantly equal to $0$
or $s(t)>0$, for all $t$.  

In the first case, $\gamma$ is constant and the conclusion clearly holds.
In the second case, the function $\frac 1 {s(t)}$ is $C$-Lipschitz continuous on $[a,b]$.
Let, in the second case,  $s$ assume its maximum $s_0$ on $[a,b]$ at the point $t_0$.
Setting $\epsilon:=\frac 1 {s_0}$, we get 
$$\frac 1 {s(t)} \leq \epsilon + C\cdot |t-t_0|\;, $$
$$s(t) \geq \frac 1 {\epsilon + C\cdot |t-t_0|}\;.$$
Thus, we can estimate the Euclidean length of $\gamma$ as
$$r= \int _a ^b s(t) \, dt  \geq \int_ a^b \frac 1 {\epsilon + C\cdot |t-t_0|} \,dt \geq \int_ a^b \frac 1 {\epsilon + C\cdot (t-a)} \,dt  =$$
$$=\frac 1 C \cdot  (\log (C(b-a) +\epsilon) -\log (\epsilon))=\frac 1 C \cdot \log (1+ \frac { C\cdot (b-a)} {\epsilon}) \;.$$
Therefore,
$$e^{Cr} -1\geq   \frac { C\cdot (b-a)} {\epsilon}$$
Recalling that $\frac  1 {\epsilon}$ is the maximum of $\|\gamma'\|$ completes the proof.
%$$s_0 =\frac 1 {\epsilon} \leq \frac {e^{Cr} -1} {C\cdot (b-a) }\;.$$
 %
%
\end{proof}

%\subsection{Chart-free formulation and stability}
%From the definition it is not evident, and it is probably wrong in general, that this notion is invariant under $\mathcal C^{1,1}$-diffeomorphisms.  Therefore, there is some ambiguity in generalizing the notion from charts to manifolds. We choose the following formulation:

%We call a curve $\gamma :I\to M$ a \emph{universal Filippov-geodesic} if for any $\mathcal C^{1,1}$-chart $\phi:U\to M$ the  intersection of $\gamma$ with $U$ is a Filippov geodesic with respect to the pull-back metric.

As a consequence we derive:
\begin{cor}\label{cor_fil_lim}
	Let $\gamma_i :[a,b]\to U$ be a sequence of  causal curves converging pointwise to  $\gamma:[a,b]\to U$.  If all curves  $\gamma _i$ are Filippov-geodesics then so is $\gamma$.
%	
%	Filippov-geodesics converging pointwise to  $\gamma:I\to U$. %If the derivatives $\gamma_i' (t)$ are uniformly bounded, for all $i$ and all $t\in I$, %
%	Then $\gamma$ is a Filippov-geodesic. 
\end{cor}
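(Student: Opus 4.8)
The plan is to pass to the limit in the Filippov differential inclusion using uniform bounds coming from Lemma~\ref{lem: fil}. First I would observe that, since all $\gamma_i$ are causal curves in the chart $U$ of Section~\ref{sec:not}, they are bilipschitz with constant $2$ when parametrized by Euclidean arclength; combined with the pointwise (hence, on a compact interval, uniform after the standard Arzel\`a--Ascoli argument for equi-Lipschitz curves) convergence $\gamma_i\to\gamma$, the Euclidean lengths $r_i$ of the $\gamma_i$ are uniformly bounded, say by $R$. Applying Lemma~\ref{lem: fil} to each $\gamma_i$ on $[a,b]$ yields a uniform bound $\|\gamma_i'(t)\|\le \frac{e^{CR}-1}{C}\cdot\frac1{b-a}=:C_1$, and then the geodesic inequality $\|\gamma_i''(t)\|\le C\cdot\|\gamma_i'(t)\|^2\le C C_1^2$ holds almost everywhere. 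Hence the $\gamma_i$ are uniformly $\mathcal C^{1,1}$-bounded, so after passing to a subsequence we get $\gamma_i\to\gamma$ in $\mathcal C^1([a,b])$ and $\gamma_i'\to\gamma'$ uniformly, with $\gamma'$ Lipschitz; in particular $\gamma$ itself is $\mathcal C^{1,1}$, so $\gamma''$ exists almost everywhere and is essentially bounded, and it only remains to verify the inclusion $-\gamma''(t)\in\hat\Gamma_{\gamma(t)}(\gamma'(t),\gamma'(t))$ for a.e.\ $t$.

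To prove the inclusion I would use the weak-$\ast$ convergence of the second derivatives. Since $\|\gamma_i''\|_{L^\infty}\le CC_1^2$ uniformly, after a further subsequence $\gamma_i''\rightharpoonup \psi$ weakly-$\ast$ in $L^\infty([a,b],\R^n)$ for some $\psi$; because $\gamma_i'\to\gamma'$ uniformly, the limit must be $\psi=\gamma''$ (test against $C^\infty_c$ functions and integrate by parts). Fix $\delta>0$ and a null set $N$; for each fixed $t$ and all large $i$ we have $\|(\gamma_i(t),\gamma_i'(t))-(\gamma(t),\gamma'(t))\|<\delta/2$, so $-\gamma_i''(t)\in \hat\Gamma_{\gamma_i(t)}(\gamma_i'(t),\gamma_i'(t))\subseteq \mathcal K(\delta,N)$ evaluated at the base point $(\gamma(t),\gamma'(t))$ — here one uses that the $\delta/2$-ball around $(\gamma_i(t),\gamma_i'(t))$ is contained in the $\delta$-ball around $(\gamma(t),\gamma'(t))$, together with the definition of $\mathcal K(\delta,N)$ as a closed convex hull over such a ball minus $N$. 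Thus for a.e.\ $t$ and all large $i$, $-\gamma_i''(t)$ lies in the closed convex set $\mathcal K(\delta,N)$ associated to $(\gamma(t),\gamma'(t))$.

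Now comes the step I expect to be the main obstacle: deducing from $-\gamma_i''\in \mathcal K(\delta,N)$ (pointwise a.e., with $\mathcal K(\delta,N)$ depending measurably on $t$ through the base point) and the weak-$\ast$ convergence $\gamma_i''\rightharpoonup\gamma''$ that $-\gamma''(t)\in\mathcal K(\delta,N)$ for a.e.\ $t$. This is a standard but slightly delicate closure-under-weak-convergence argument: the set of $L^1$ (or $L^\infty$) vector fields $u$ with $u(t)$ in a fixed closed convex set $K(t)$ a.e.\ is convex and strongly closed, hence weakly closed, and convexity of the fibres $K(t)=\mathcal K(\delta,N)$-at-$(\gamma(t),\gamma'(t))$ is exactly what makes this work (this is the reason the essential convex hull in the Filippov formulation is a \emph{convex} hull). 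Alternatively, one can invoke Mazur's lemma to replace $\gamma_i''$ by convex combinations converging strongly, then pass to an a.e.-convergent subsequence. Either way, we conclude $-\gamma''(t)\in\mathcal K(\delta,N)$ for a.e.\ $t$, for every $\delta>0$ and every null set $N$; intersecting over a countable cofinal family of $\delta\downarrow 0$ and using that it suffices (as remarked after the definition of $\hat\Gamma$) to consider $N$ of the form $N_0\times\R^n$ together with a countable-exhaustion argument, we obtain $-\gamma''(t)\in\hat\Gamma_{\gamma(t)}(\gamma'(t),\gamma'(t))$ for a.e.\ $t$. Finally, since Filippov-geodesics are a local notion and the argument was carried out in an arbitrary chart of the type fixed in Section~\ref{sec:not}, $\gamma$ is a Filippov-geodesic. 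A subsequence-and-uniqueness remark shows that the whole original sequence, not just a subsequence, does the job, but only the existence of the limiting Filippov-geodesic $\gamma$ is asserted.
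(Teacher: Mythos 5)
Your proposal is correct. The initial reductions — uniform bound on the first derivatives via Lemma~\ref{lem: fil}, uniform $\mathcal C^{1,1}$ bound via the argument of Proposition~\ref{prop: Filippov}, and the upgrade of pointwise convergence to $\mathcal C^1$-convergence on the compact interval — are exactly the steps the paper takes. Where you and the paper diverge is in the final step: the paper treats the stability of the Filippov differential inclusion under $\mathcal C^1$-convergence with a uniform Lipschitz bound on derivatives as a known result and cites it directly (\cite[Cor.\ 1, Ch.\ 2, \S 7, p.\ 77]{Filippov:1988}), whereas you supply a self-contained proof of that stability: weak-$\ast$ precompactness of the uniformly $L^\infty$-bounded $\gamma_i''$, identification of the weak-$\ast$ limit with $\gamma''$ by testing against smooth compactly supported functions, the ball-nesting observation $B_{\delta/2}(\gamma_i(t),\gamma_i'(t))\subset B_\delta(\gamma(t),\gamma'(t))$ to place $-\gamma_i''(t)$ in the fixed closed convex sets $\mathcal K(\delta,N)$ associated to the limit curve, and then closedness of $\{u : u(t)\in K(t) \text{ a.e.}\}$ under weak convergence (equivalently Mazur's lemma), followed by a countable intersection over $\delta\downarrow 0$ and null sets. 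This is exactly the content of Filippov's cited stability theorem, reproved in the special case at hand. Your route is more self-contained and makes visible why convexity of the essential hull is the crucial structural input; the paper's route is more economical. Both are correct, and yours would be a reasonable substitute if one wished to avoid the external reference.
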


\begin{proof}
	%The statement is local, so we may restrict ourselves to a chart $U$.
	 By  our assumption on $U$, all causal curves in $U$ have Euclidean length bounded by a uniform constant $r_0$.  By Lemma \ref{lem: fil}, this implies that all $\gamma_i$ have uniformly  bounded first derivatives. 
	 As has been shown in Proposition \ref{prop: Filippov}, this implies that the curves $\gamma _i$ are uniformly $\mathcal C^{1,1}$. Then, the pointwise convergence implies that $\gamma _i$ converges uniformly to $\gamma$ and 
	$\gamma_i'$ converges uniformly to $\gamma'$. Therefore we obtain the conclusion by  applying stability theorems  for solutions of differential inclusions \cite[Cor.\ 1, Ch.\ 2, \S 7, p.\ 77]{Filippov:1988}.
\end{proof}

%As explained above, any Filippov geodesic is a $\mathcal C^{1,1}$ curve. If a sequence of (universal) Filippov geodesics $\gamma _i$  with uniformly bounded derivatives $\gamma _i'$  converges pointwise to a curve $\gamma$ then $\gamma _i$ and $\gamma _i'$  converge uniformly to $\gamma$ and $\gamma'$, thus  $\gamma$ is a (universal) Filippov geodesic. This is a special case of a corresponding result for solutions of differential inclusions \cite[Cor.\ 1, Ch.\ 2, \S 7, p.\ 77]{Filippov:1988}.

  \section{First variational formula}\label{sec:var}
    In the remainder of this section we are going to prove the following statement. The proof is  the standard variational argument, known as ``first variation formula of length" or the du Bois-Reymond trick. The only non-classical point in the proof is to carry out all arguments not for a single variation but for a ``full-dimensional" family of variations. This  allows us to use the properties of the Levi-Civita connection, which in our setting are valid only almost everywhere.
  
  \begin{prop} \label{prop: geodeq}
  	Let $g$ be a Lipschitz continuous Lorentzian metric on a domain $U\subset \R^n$. Let $\gamma :[0,1]\to U$ be a $\mathcal C^{1,1}$-timelike \extremal curve with constant Lorentzian speed $|\gamma'(t)|_g$. Then $\gamma$ solves the geodesic equation in the sense of Filippov. 
  \end{prop}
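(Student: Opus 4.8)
The plan is to carry out the classical first-variation argument, but executed for a full-dimensional family of competitor curves rather than a single one-parameter variation, so that the almost-everywhere validity of the Levi-Civita/Koszul formulas can be exploited via the Filippov essential-hull construction.

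First I would set up the variations. Fix a point $t_0\in(0,1)$ and a small ball $B\subset\R^n$. For each $v\in B$ consider the curve $\gamma_v$ obtained from $\gamma$ by a compactly supported deformation concentrated near $t_0$, depending smoothly on $v$ and tangentially pushing $\gamma$ in direction $v$ (e.g. $\gamma_v(t)=\gamma(t)+\chi(t)\,\eta(t)\,v$ for a fixed bump function $\chi$ supported in a subinterval around $t_0$ and a smooth cutoff $\eta$, or more precisely an exponential-type variation so that the deformed curves stay timelike). Since $\gamma$ is $\mathcal C^{1,1}$ and uniformly timelike with constant Lorentzian speed, for $v$ small all $\gamma_v$ are still timelike causal curves with the same endpoints, hence $\mathcal L_g(\gamma_v)\le \mathcal L_g(\gamma)$ by maximality. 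Thus $v\mapsto \mathcal L_g(\gamma_v)$ has a maximum at $v=0$.

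Second I would differentiate the length functional. Writing $E_g(\gamma_v)=\int_0^1 g_{\gamma_v}(\gamma_v',\gamma_v')\,dt$ (the energy, which because of the constant-speed parametrization of $\gamma$ has the same critical points as length at $v=0$), one computes $\nabla_v E_g(\gamma_v)\big|_{v=0}$. The derivative of $g_{\gamma(t)}(\gamma'(t),\gamma'(t))$ in the direction of a variation field involves both the derivative of $g$ along the base curve and the derivative of $\gamma'$; the du Bois-Reymond trick is to integrate by parts \emph{the $\gamma''$-free way}, i.e. to write everything in terms of $\gamma'$ and the variation field only, avoiding differentiating $\gamma'$ (which is merely Lipschitz). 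The Koszul formula lets one express the integrand, at every $t$ where $g$ is differentiable at $\gamma(t)$, in terms of $\Gamma_{\gamma(t)}(\gamma'(t),\gamma'(t))$ paired against the variation field; after the integration by parts one obtains, for each $v$, an expression of the form $\int (\gamma''(t)+\text{(a.e. value of }\Gamma_{\gamma(t)}(\gamma'(t),\gamma'(t)))\cdot(\text{variation field})\,dt=0$. The crucial point is that the set of $t$ where $g$ fails to be differentiable at $\gamma(t)$ has measure zero (because $\gamma$ is bilipschitz, so preimages of null sets are null), and that making the variation full-dimensional in $v$ and using a density/Fubini argument over the parameter and over $v$, one gets the vanishing of the first variation simultaneously along a set of admissible "test directions" rich enough to pin down $-\gamma''(t)$ as lying in the essential convex hull $\hat\Gamma_{\gamma(t)}(\gamma'(t),\gamma'(t))$ for a.e. $t$.

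Concretely, the final step: from the stationarity one extracts that for a.e. $t_0$, and for every $\delta>0$ and every null set $N$, the vector $-\gamma''(t_0)$ can be approximated by convex combinations of values $\Gamma_x(w,w)$ with $(x,w)$ near $(\gamma(t_0),\gamma'(t_0))$ and outside $N$; letting $\delta\to0$ and intersecting over all $N$ yields $-\gamma''(t_0)\in\hat\Gamma_{\gamma(t_0)}(\gamma'(t_0),\gamma'(t_0))$, which is exactly the Filippov solution condition. Here one uses the remark from Section~\ref{sec: Filip} that it suffices to consider exceptional sets of the form $N=N_0\times\R^n$, which matches the structure of the sets of non-differentiability coming from $g$. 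I expect the main obstacle to be precisely this measure-theoretic bookkeeping: ensuring that the full-dimensional variation argument produces the essential convex hull and not just the (smaller, hence stronger and possibly false) pointwise value of $\Gamma$, and handling the fact that $\gamma$ may pass through the bad set of $g$ — this is exactly why a single variation is insufficient and one must vary over a ball of directions $v$, using that for a.e. $v$ the perturbed curve meets the bad set only in a null set of times, then combining these via Fubini. The constant-speed, uniformly-timelike, $\mathcal C^{1,1}$ hypotheses are used to keep all competitors causal and to make the integration by parts legitimate.
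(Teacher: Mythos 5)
Your proposal follows essentially the same strategy as the paper's proof: a full-dimensional family of compactly supported variations, a du~Bois-Reymond-style first-variation computation valid for almost every parameter and almost every time, and measure-theoretic (Fubini) bookkeeping to convert the resulting stationarity into the Filippov essential-convex-hull condition for $-\gamma''$. The one step you leave implicit --- how stationarity actually forces $-\gamma''(t_0)$ into the hull --- the paper makes concrete by arguing contrapositively: if $-\gamma''(t_0)$ lay outside the closed convex set, a separating vector $h$ would exist (in fact an open set of them), and varying $\gamma$ in direction $h$ near a Lebesgue point of $\gamma''$ produces a strictly positive first variation of length, contradicting maximality.
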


  % say that a curve $\gamma:I\to M$ is a Filippov geodesic, if  any $t\in I$
  %has a neighborhood $I_0$ in $I$ and a $\mathcal C^{1,1}$ chart $\phi:U\to M$

  %Let $U\subset \R^n$ be an open chart with a Lorentzian metric $g$

  %	\extremal $\mathcal C^{1,1}$-timelike curves solve the geodesic equation}
  Assume that the statement is wrong. We then find a Lebesgue point $t_0\in I$ of the $L^{\infty}$-function $\gamma'' :I\to U$, a negligible set $N\subset U$ (containing the set of all points where $g$ is not differentiable) and some $\delta >0$
  such that the following holds true.  The closed convex hull $K$ of the
  set of all vectors of the form $\Gamma_{x } (v,v)$ with $(x,v)\in B_\delta(\gamma (t_0),\gamma'(t_0)) \setminus (N \times \R^n)$ does not contain $-\gamma''(t_0)$. 
	Thus we find a unit vector $h$ and some $\epsilon >0$ such that \begin{equation}\label{eq:separation}
	g_{x_0}( h,\gamma''(t_0)+ k ) >\epsilon
\end{equation}
  for all $k\in K$, where $x_0=\gamma(t_0)$. In fact, (\ref{eq:separation}) holds for all $h$ in a small nonempty open set $O$. Shrinking $O$ we can assume that all $h\in O$ are linearly independent of $\gamma' (t_0)$.
  
  Reparametrizing $\gamma$ and changing coordinates, we may assume  that $t_0=0$, $\gamma (0)=0$ and that $\gamma$ is parametrized on an interval $[-r,r]$ for some small $r$. We further assume that $g_0$ is the standard Minkowski product on $\R^n$.	By continuity of $\gamma '$  we may further assume that the inequality (\ref{eq:separation})
holds for all  $k$ of the form $\Gamma _x (\gamma '(t), \gamma'(t))$, where $t\in[-r,r]$ and $x$ in $B_\delta(0) \setminus N$ are arbitrary.
 % We replace $U$ by $B_r(x)$. 

  % By continuity of $\gamma '$ and boundedness of $\Gamma$ we can assume that 
  %$ K$ contains the convex hull of all vectors of the form $\Gamma _x (v,v)$ for almost all $x \in B_r(\gamma (t_0))$ and all $v$ in an $r$-neighborhood of $v_t$.
  
 % We replace $U$ by $B_r(x)$, $I$ by $I_0$ and can assume $t=0$ and $\gamma (0)=0$. Thus, we have a subset $N$ of zero measure in $B_r(x)$, some $\epsilon>0$ and some unit vector $h$ such that the following holds true for the  closed convex hull $K$ of all vectors of the form $\Gamma _x(v_s,v_s)$ with $x\in B_r(x)\setminus N$ and $s\in I$:  for any $k\in K$ we have
 % $$\left\langle h,\gamma''(t) \right\rangle + \left\langle h, k\right\rangle > \epsilon\;.$$
  
 % The same inequality is true for all $h'$ instead of $h$ if $h'$ and $h$ are sufficiently close to each other, thus it is true for all $h$ in a small open set $O$.
  %\cl{By shrinking $O$ we can assume that all $h\in O$ are linearly independent of $\gamma' (0)$.}
  %By the same reason it holds for all $u$ close to $\gamma''(t)$.

	In the following we can and will assume that $\sup_{u_1,u_2\in U}\|u_1-u_2\|<1$.  Let $H$ be a hyperplane containing $h$ and transversal to $\gamma' (0)$. Since $0$ is a Lebesgue point of $\gamma''$, we have for all small $r_0>0$
	\begin{equation}\label{eq:Lebesgue_point}
			\int _{-r_0} ^{r_0} \left\| \gamma''(t)-\gamma''(0) \right\| dt \leq \frac{\epsilon}{8L}\cdot r_0,
	\end{equation}
where $L$ is the Lipschitz constant of $g$.

We fix such an $r_0$ and choose a smooth function $f\colon[-r_0,r_0]\rightarrow [0,1]$ vanishing only in $\pm r_0$ with $f(t)\geq 1/2$ on $[-r_0/2,r_0/2]$. For a small neighborhood $W$ of $0$ in $H$, we consider the map $F:W\times [-r_0,r_0]\to U$ given by $$F(w,t)=-f(t)\cdot w+\gamma (t)\;.$$  
  
  For any $w\in W$, the curve $\gamma _w (t) :=F(w,t)$ has the same endpoints as
  $\gamma$.  By assumption $|\gamma'(t)|_g$ is a positive constant. Hence, if $W$ is small enough, then all curves $\gamma_w$ are timelike. 
	%and for any $k\in K$ we have that	
  %$$\left\langle h,u \right\rangle + \left\langle h, k\right\rangle > \epsilon\;.$$
	%for	all $u$ close to $\gamma''(0)$. 
	
	The map $F$ is a $\mathcal{C}^{1,1}$-diffeomorphism onto the image outside the boundary points $\gamma (\pm r_0)$. For sufficiently small $W$ the map 
  $$w\to \Le_w :=\Le(\gamma _w) :=  \int _{-r_0} ^{r_0} \sqrt{g(\gamma _w'(t), \gamma _w'(t))} \; dt$$
  is Lipschitz continuous.   
  %Let $r:W\to \R$ be the norm with respect to a scalar product.
  %Then $\frac \partial {\partial r}$ and  $\frac \partial {\partial t}$
  %are commuting locally Lipschitz vector fields outside of $\gamma$.

  We can follow the classical computation for the first derivative of the length, cf.\ e.g.\ %\cite[Ch.\ 3]{Grove}
  \cite[Prop.\ 10.2]{ONe:83} or \cite[Ch.\ 3]{Grove}. We see that for almost all small $w\in W$ the derivative 
  $\frac \partial {\partial s}|_{s=\rho} \Le_{sw}$ exists for almost all small $\rho$ and can be computed by the usual formula 
  \[
		\frac \partial {\partial s}\biggr\rvert_{s=\rho} \Le_{sw} = \int _{-r_0} ^{r_0} f(t) g(w,\tilde v_{\rho w}'(t)+\Gamma_{\gamma_{\rho w}(t)}(\tilde v_{\rho w}(t),\gamma_{\rho w}'(t))) dt \]		
	where $\tilde v_{\rho w}(t) :=\frac{\gamma_{\rho w}'(t)}{\left|\gamma_{\rho w}'(t)\right|}$.
	
	We claim that for almost all $w\in O$ this expression is positive for almost all small $\rho>0$ (and we only consider such $\rho$ with $\rho\cdot O\subset W$). Indeed for such $w$ we have
\begin{equation*}\label{eq:crt_7}
	\begin{split}
		\frac \partial {\partial s}\biggr\rvert_{s=\rho} \Le_{sw} \geq &\int _{-r_0} ^{r_0}  f(t) g(w,\tilde v_{0}'(0)+\Gamma_{v_{\rho w}(t)}(\tilde v_{\rho w}(t),\gamma_{\rho w}'(t))) dt \\
		 &-L\bigl(\int _{-r_0} ^{r_0}  \left\|\tilde v_{0}'(t)-\tilde v_{0}'(0)\right\| dt -\int _{-r_0} ^{r_0}  \left\|\tilde v_{\rho w}'(t)-\tilde v_{0}'(t) \right\| dt\bigr)\,. \\
			\end{split}
 \end{equation*}
By continuity of $\tilde v_{\rho w}'(t)$ in $\rho$ (at times of existence) and the constant Lorentzian speed $|v_t|$ of $\gamma$, the integrand of the last term can be made smaller than $\epsilon/16|v_t|$ by choosing $\rho$ small enough. The second term can be estimated with (\ref{eq:Lebesgue_point}). Hence
\[
		\frac \partial {\partial s}\biggr\rvert_{s=\rho} \Le_{sw} \geq \frac{1}{2|v_t|} \int _{-r_0/2} ^{r_0/2}  g\left(w,\tilde v_{0}'(0)+\Gamma_{v_{\rho w}(t)}(\gamma_{\rho w}'(t),\gamma_{\rho w}'(t))\right) dt - C\frac{ r_0 \epsilon}{|v_t|}
\]
with $C<\frac 1 2$. By Lipschitz continuity of $g$ and boundedness of $\Gamma$ the same estimate holds for $g_0$ instead of $g$, if we choose $r_0$ much smaller than $\epsilon$. By continuity in $\rho$ and our choice of $w$ we obtain
\[
		\frac \partial {\partial s}\biggr\rvert_{s=\rho} \Le_{sw} \geq   \frac{ r_0 \epsilon}{2|v_t|} - C\frac{ r_0 \epsilon}{|v_t|} > 0
\]
 for almost all sufficiently small $\rho>0$. Thus, $\mathcal L_{\rho w} =\mathcal L(\gamma _{\rho w}) >\mathcal L(\gamma)$ for almost all sufficiently small $\rho$, in contradiction
to the maximality of $\gamma$. This finishes the proof of Proposition \ref{prop: geodeq}.

  %We  find a neighborhood $O$ of $\gamma$ and a $\mathcal C^{1,1}$-diffeomorphism $\phi: O'\subset \R^n \to O$  such that 
  %$\phi  ^{-1} (\gamma)$ is a linear segment and such that $\phi$ is of class $\mathcal C^2$ outside of $\gamma$. 

  %Note, that due to the $\mathcal C^2$-property of $\phi $ almost everywhere 

%We  find a neighborhood $O$ of $\gamma$ and a $\mathcal C^{1,1}$-diffeomorphism $\phi: O'\subset \R^n \to O$  such that 
%$\phi  ^{-1} (\gamma)$ is a linear segment and such that $\phi$ is of class $\mathcal C^2$ outside of $\gamma$. 

%Note, that due to the $\mathcal C^2$-property of $\phi $ almost everywhere 

\section{$\mathcal C^{1,\alpha}$ curves} \label{sec: alpha}

The following lemma is a variant of \cite[Lem.\ 2.1]{Calabi-Hartman}, modified for our purposes.

\begin{lem}\label{lem:geo_diff_crit}
Let $\gamma :[a,b]\to \R^n$
be a bilipschitz curve parametrized by arclength.  Let $0<\alpha \leq 1$ be fixed.
Then the following are equivalent:
\begin{enumerate}
\item The curve $\gamma$ is $\mathcal C^{1,\alpha}$.

\item There exists some $C\geq 1$ such that for all $0\leq h\leq \frac 1 C$ and all
intervals $[t,t+h]\subset [a,b]$ the image of the restriction $\gamma ([t,t+h])$ is contained in the tube $B_{C\cdot h^{1+\alpha}} ([\gamma (t),\gamma (t+h)])$ of radius $C\cdot h^{1+\alpha}$ around the linear segment between $\gamma (t)$ and $\gamma (t+h)$.
\end{enumerate} 
\end{lem}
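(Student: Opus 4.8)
The plan is to prove the two implications separately; $(1)\Rightarrow(2)$ is elementary, while $(2)\Rightarrow(1)$ carries all the weight.

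For $(1)\Rightarrow(2)$ I would use that $\gamma\in\mathcal C^{1,\alpha}$ means $\gamma'$ is $\alpha$-H\"older with some constant $K$ (and $|\gamma'|\equiv1$). Given $[t,t+h]\subseteq[a,b]$ and $s\in[0,h]$, compare $\gamma(t+s)$ with the point $P_s:=\gamma(t)+\tfrac{s}{h}\bigl(\gamma(t+h)-\gamma(t)\bigr)$ on the chord. Writing $\gamma(t+u)-\gamma(t)-u\,\gamma'(t)=\int_0^u\bigl(\gamma'(t+\sigma)-\gamma'(t)\bigr)\,d\sigma$ and bounding the integrand by $K\sigma^\alpha$ gives $\|\gamma(t+u)-\gamma(t)-u\,\gamma'(t)\|\le\tfrac{K}{1+\alpha}u^{1+\alpha}$ for $u=s$ and $u=h$; subtracting the $u=s$ bound and $\tfrac{s}{h}$ times the $u=h$ bound yields $\|\gamma(t+s)-P_s\|\le\tfrac{2K}{1+\alpha}h^{1+\alpha}$. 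Hence $(2)$ holds with $C:=\max\{1,\tfrac{2K}{1+\alpha}\}$.

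For $(2)\Rightarrow(1)$ the core step is to show that for each $t$ the forward chord directions $u_h(t):=\frac{\gamma(t+h)-\gamma(t)}{\|\gamma(t+h)-\gamma(t)\|}$ converge as $h\to0^+$ to a unit vector $\tau(t)$, quantitatively with $\|u_h(t)-\tau(t)\|\le C_2h^\alpha$. For the single dyadic step I would apply $(2)$ on $[t,t+h]$ to write $\gamma(t+h/2)=\gamma(t)+\lambda\bigl(\gamma(t+h)-\gamma(t)\bigr)+w$ with $\lambda\in[0,1]$ and $\|w\|\le Ch^{1+\alpha}$; the bilipschitz hypothesis (constant $\Lambda$) gives $\|\gamma(t+h/2)-\gamma(t)\|\ge\tfrac{h}{2\Lambda}$, so $A:=\lambda\|\gamma(t+h)-\gamma(t)\|$ satisfies $A\ge\tfrac{h}{4\Lambda}$ for $h$ small. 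Since $\gamma(t+h/2)-\gamma(t)=A\,u_h(t)+w$, a routine normalization estimate gives $\|u_{h/2}(t)-u_h(t)\|\le\tfrac{4\|w\|}{A}\le 16C\Lambda\,h^\alpha$, and the same bound holds with $h/2$ replaced by any $h'\in[h/2,h]$. Telescoping over dyadic scales — here convergence of the geometric series $\sum 2^{-j\alpha}$ is used, and this is exactly where $\alpha>0$ enters — gives $\|u_{h'}(t)-u_h(t)\|\le C_2h^\alpha$ for all $0<h'\le h$, so $(u_h(t))$ is Cauchy with limit $\tau(t)$ and the claimed rate; the analogous statement holds for backward chord directions $\tilde u_h(t)$, with limit $\tilde\tau(t)$. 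I would then show $\tau$ is $\alpha$-H\"older: applying $(2)$ on $[t-h,t+h]$ and letting $h\to0$ shows $\tilde\tau(t)=\tau(t)$, and for $t<t'$ with $\eta:=t'-t$ small the chord $[\gamma(t),\gamma(t')]$ is at once the forward chord of length $\eta$ at $t$ and the backward chord of length $\eta$ at $t'$, i.e.\ $u_\eta(t)=\tilde u_\eta(t')$, whence $\|\tau(t)-\tau(t')\|\le\|\tau(t)-u_\eta(t)\|+\|\tilde u_\eta(t')-\tilde\tau(t')\|\le 2C_2\eta^\alpha$. Finally, since $\gamma$ is $1$-Lipschitz it is a.e.\ differentiable with $|\gamma'|=1$ a.e., and at such $t$ one has $\gamma'(t)=\lim_{h\to0^+}\tfrac{\|\gamma(t+h)-\gamma(t)\|}{h}\,u_h(t)=\tau(t)$; as $\tau$ is continuous, the fundamental theorem of calculus yields $\gamma(s)-\gamma(r)=\int_r^s\tau(\sigma)\,d\sigma$ for all $r,s$, so $\gamma\in\mathcal C^1$ with $\gamma'=\tau$, and $\gamma'$ being $\alpha$-H\"older completes the proof (globalizing the local H\"older bound on the compact interval $[a,b]$ is routine).

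The main obstacle is precisely the passage from the chord condition to regularity of the \emph{direction}. The tempting shortcut of bounding second differences $\|\gamma(t+h)+\gamma(t-h)-2\gamma(t)\|$ fails, because $(2)$ controls only the \emph{distance} of the midpoint $\gamma(t+h/2)$ from the chord and not its position along it — the parameter $\lambda$ above need not be near $\tfrac12$. One is therefore forced to track the chord directions $u_h(t)$ themselves, and the delicate point is that the bilipschitz lower bound is what converts the $O(h^{1+\alpha})$ transversal deviation into an $O(h^\alpha)$ angular deviation whose dyadic accumulation is summable exactly because $\alpha>0$; getting the constants in this single-scale estimate right, and then controlling all intermediate (non-dyadic) scales, is the technical heart.
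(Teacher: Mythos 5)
Your proof is correct, but for the hard direction $(2)\Rightarrow(1)$ it takes a genuinely different route from the paper. The paper's argument is short but not self-contained: after the Pythagoras estimate showing the intrinsic and extrinsic distances on $X=\gamma([a,b])$ differ only by a factor $1+O(d^{2\alpha})$, it invokes results from an external reference (cited as Proposition~1.1 and Theorem~1.2 of Ly-conv, in the spirit of Lytchak's work on metric regularity of subsets) asserting that such a subset of $\R^n$ is locally traced out by a $\mathcal C^{1,\alpha}$-arclength curve, and then uses injectivity of $\gamma$ to identify that curve with $\gamma$. Your argument instead constructs the candidate tangent field $\tau$ directly as the limit of chord directions $u_h(t)$, with a dyadic telescoping estimate controlling $\|u_{h'}(t)-u_h(t)\|$ at all intermediate scales, and then verifies that $\tau$ is $\alpha$-H\"older and equals $\gamma'$ via the fundamental theorem of calculus. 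This is more elementary and self-contained; the trade-off is length, and the paper's route also yields a stronger statement (about the image set $X$ rather than just the given parametrization) which is not needed here. You correctly identify the two subtle points — that $(2)$ does not control the parameter $\lambda$ along the chord, forcing one to track directions rather than second differences, and that the bilipschitz lower bound is what converts the transversal $O(h^{1+\alpha})$ deviation into an angular $O(h^\alpha)$ one — and the single-scale and telescoping estimates you give are sound. For $(1)\Rightarrow(2)$ both your argument and the paper's are the same standard integration of the H\"older modulus of $\gamma'$.
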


\begin{proof}
 If $\gamma$ is $\mathcal C^{1,\alpha}$, then there exists some $C>1$ such that for all small
 $h$ and all $r\in [t,t+h]$ we have (see \cite[Lemma 2.1]{Yaman})
 $$\|h\cdot  \gamma '(r) - (\gamma (t) -\gamma (t-h))\| \leq C\cdot h^{1+\alpha} \;.$$	
	Dividing by $h$ and integrating from $t $ to $r$ we deduce
$$\gamma (r)=  \gamma (t)+ \frac{r-t} {h} \cdot  (\gamma (t+h) -\gamma (t)) + q(r)\;,$$
where $\|q(r)\|\leq C\cdot h^{1+\alpha}$. Since $q(r)$ bounds the distance to the segment between $\gamma (t)$ and $\gamma (t+h)$, this proves that (1) implies (2).

Assume (2) and let $L$ denote the biLipschitz constant of $\gamma$, i.e. $|t_2-t_1|\leq L d(\gamma(t_1),\gamma(t_2))$.  
Set $X=\gamma ([a,b])$ and $\epsilon:= h /L$.
Consider arbitrary points $x_1,x_2\in X$ with $\|x_1-x_2\| \leq \epsilon$.

Considering the projection of the part $X'$ of $\gamma$ between $x_1$ and $x_2$ onto the segment  $[x_1,x_2]$ we find a point $p$ in $X'$  which is projected onto the midpoint $m$ between $x_1$ and $x_2$.  By assumption 
$d(p,m)\leq  C\cdot d^{1+\alpha} (x_1,x_2)$.

By Pythagoras, we deduce
that for $i=1,2$ 
$$d(x_i,p)\leq \frac 1 2 d(x_1,x_2) \cdot (1+ 4C^2 d^{2\alpha} (x_1,x_2))\;$$
By \cite[Proposition 1.1]{Ly-conv} and \cite[Theorem 1.2]{Ly-conv} there exists some $\bar \epsilon >0$ such that all pairs of points in $X$ at distance less than $\bar \epsilon$ are connected in $X$ by a $\mathcal C^{1,\alpha}$-curve parametrized by arclength. Since $\gamma$ is injective, this curve must coincide with $\gamma$, thus finishing the proof.
\end{proof}

%\section{Almost convexity}

\section{Quantitative triangle inequality} \label{sec: triang}
Recall that vectors $u,v \in C_+$ in the positive causal cone \[C_+ = \{ v=(v_t,v_x) \in \R \times \R^n=\R^{n+1} \mid \left\langle v ,v\right\rangle \geq 0, \; v_t > 0 \}\] of the 	
	standard Minkowski space $(\R^{n+1}, \left\langle \cdot,\cdot \right\rangle)$ with Minkowski product $\left\langle u,v \right\rangle = u_tv_t-u^T_xv_x$ and norm $\left|v\right| = \sqrt{\left|\left\langle v,v\right\rangle\right|}$ satisfy the reversed triangle inequality
\begin{equation}\label{eq:reverse_triangle}
			\left|u+v \right|\geq \left|u\right| + \left|v\right|
\end{equation}
as well as the reversed Cauchy-Schwarz inequality $\left\langle u,v \right\rangle \geq \left| u \right| \left|v\right|$, see e.g.\ \cite[Prop.\ 5.30]{ONe:83}. Here we describe a quantitative version of the reversed triangle inequality:
%More precisely, we prove the following statement, where we refer to the Euclidean distance induced by the standard Euclidean product $\left\langle u,v\right\rangle_2 = u^Tv$ with corresponding norm $\left\|\cdot \right\|$.
\begin{lem}\label{lem:cont_rev_tria_ineq} For $A=\frac{1}{10}$ and for all $u,v \in C_+$, the inequality
\begin{equation}\label{eq:controlled_reverse_triangle}
			\left|u+v \right|^2 \geq \left|u+v \right| \left(\left|u\right| + \left|v\right|\right) + A \cdot D^2
\end{equation}
holds, where $D$ is the Euclidean distance from $u$ and $v$ to the Euclidean line spanned by $u+v$.
\end{lem}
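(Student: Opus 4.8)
The plan is to reduce everything to a two-dimensional computation. Since $u,v\in C_+$, the vectors $u,v,u+v$ all lie in the linear span $\Pi$ of $u$ and $v$, which is at most two-dimensional, and the Euclidean distance $D$ referred to in the statement is the distance measured inside $\Pi$ (distance to a line in a higher-dimensional ambient space is the same as distance within the plane containing everything). If $u$ and $v$ are Euclidean-parallel then $D=0$ and \eqref{eq:controlled_reverse_triangle} degenerates to $|u+v|^2\geq |u+v|(|u|+|v|)$, which is exactly \eqref{eq:reverse_triangle}; so we may assume $u,v$ are linearly independent and work in the plane $\Pi\cong\R^2$. The first step is therefore to set up convenient coordinates on $\Pi$: I would choose an orthonormal (Euclidean) frame so that $u+v$ points along a fixed axis, write $u=(a_1,d)$, $v=(b_1,-d)$ in these Euclidean coordinates (the second coordinates are $\pm d$ with $d\geq 0$ precisely because $D$ is the common Euclidean distance of $u$ and $v$ to the line through $u+v$, and $u+v$ has vanishing second coordinate), so that $D=d$.

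The second step is to express the Minkowski norms in these coordinates. Writing the restriction of the Minkowski form to $\Pi$ as a symmetric bilinear form of Lorentzian signature $(+,-)$ (it cannot be definite, since $\Pi$ meets the interior of $C_+$), I can by a further Euclidean rotation assume the form is $\langle x,y\rangle_\Pi = \lambda x_1y_1 - \nu x_2 y_2$ for some $\lambda,\nu>0$ with, say, $\lambda\nu$ controlled — actually the cleanest route is: rotate so that the Minkowski form on $\Pi$ is a positive multiple of $\mathrm{diag}(1,-1)$ in some orthonormal-for-Euclidean basis, which is possible because any two inner products on $\R^2$ are simultaneously diagonalizable, and the signature forces the diagonal to be $(\mu,-\mu')$ with $\mu,\mu'>0$. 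Then $|w|^2 = |\mu w_1^2 - \mu' w_2^2|$ for $w\in C_+$, and on the causal cone $\mu w_1^2\geq \mu' w_2^2$. At this point the inequality \eqref{eq:controlled_reverse_triangle} becomes a concrete polynomial inequality in the finitely many real parameters $\mu,\mu'$ and the coordinates of $u,v$, subject to the causality constraints. The expression $|u+v|(|u|+|v|)$ is the one genuinely awkward term because of the square roots; the standard trick is to square: set $P=|u|^2$, $Q=|v|^2$, $S=|u+v|^2$, use $\langle u,v\rangle\geq\sqrt{PQ}$ (reversed Cauchy–Schwarz, already cited) to get $S = P+Q+2\langle u,v\rangle\geq P+Q+2\sqrt{PQ}=(\sqrt P+\sqrt Q)^2$, and then note $|u+v|(|u|+|v|)=\sqrt S(\sqrt P+\sqrt Q)$, so one must show $S - \sqrt S(\sqrt P+\sqrt Q)\geq A D^2$. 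Writing $\sqrt S = \sqrt P+\sqrt Q + E$ with $E\geq 0$ the "excess", the left side equals $\sqrt S\cdot E$, so it suffices to bound $\sqrt S\cdot E$, equivalently $S-\sqrt S(\sqrt P+\sqrt Q)$, from below by a multiple of $D^2$.

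The third step is the actual estimate of this excess in terms of $D=d$. Using reversed Cauchy–Schwarz we have $2\langle u,v\rangle - 2\sqrt{PQ}\geq 0$, and $S-\sqrt S(\sqrt P+\sqrt Q) = \tfrac12\bigl((\sqrt S-\sqrt P)^2 - P + (\sqrt S - \sqrt Q)^2 - Q + S\bigr)$ — rather than chase an identity, the efficient approach is: from $S=P+Q+2\langle u,v\rangle$ and $(\sqrt P+\sqrt Q)^2 = P+Q+2\sqrt{PQ}$ we get $S-(\sqrt P+\sqrt Q)^2 = 2(\langle u,v\rangle - \sqrt{PQ})$, and $S - \sqrt S(\sqrt P + \sqrt Q) \geq \tfrac12\bigl(S-(\sqrt P+\sqrt Q)^2\bigr) = \langle u,v\rangle - \sqrt{PQ}$, where the first inequality uses $\sqrt S\geq \tfrac12(\sqrt S + \sqrt P+\sqrt Q)$. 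So it now suffices to prove the clean inequality
\[
			\langle u,v\rangle - |u||v| \geq \tfrac{1}{10} D^2 .
\]
In the diagonalized coordinates, with $u=(a_1,d),v=(b_1,-d)$ and form $\mathrm{diag}(\mu,-\mu')$, write everything out: $\langle u,v\rangle = \mu a_1 b_1 + \mu' d^2$, $|u|^2 = \mu a_1^2 - \mu' d^2$, $|v|^2=\mu b_1^2 - \mu' d^2$. The quantity $\langle u,v\rangle - |u||v| = (\mu a_1 b_1 + \mu' d^2) - \sqrt{(\mu a_1^2-\mu' d^2)(\mu b_1^2-\mu' d^2)}$; rationalizing the difference (multiply and divide by the conjugate) turns this into $\bigl((\mu a_1 b_1+\mu' d^2)^2 - (\mu a_1^2-\mu' d^2)(\mu b_1^2-\mu' d^2)\bigr)/(\langle u,v\rangle + |u||v|)$, and the numerator simplifies to $\mu\mu' d^2(a_1+b_1)^2$, which is manifestly $\geq 0$ and is a positive multiple of $d^2=D^2$. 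The remaining point is to bound the denominator $\langle u,v\rangle + |u||v|$ above, in terms of the same normalization, by $10\mu\mu'(a_1+b_1)^2/\,\cdots$ — here one uses that after rescaling one may fix a normalization (e.g.\ $|u+v|=1$, or $\mu(a_1+b_1)^2 - \mu'(2d)^2$ equal to a fixed value) and that $d$ cannot be too large relative to $a_1+b_1$ by causality of $u+v$; a short case analysis, or simply optimizing the one-variable ratio, yields the explicit constant $A=\tfrac1{10}$ (any fixed positive constant would do for the applications, and $\tfrac1{10}$ is a safely suboptimal choice).

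The main obstacle I anticipate is bookkeeping rather than conceptual: correctly normalizing so that "$D$ = common Euclidean distance to the line through $u+v$" is compatible with the Lorentzian diagonalization (the Euclidean rotation that diagonalizes the Minkowski form is generally not the one that makes $u+v$ axis-aligned, so one cannot impose both simultaneously, and the identity $D = |d|$ must be re-derived after diagonalization), and then making the denominator bound uniform over the non-compact parameter range, which is where the causality constraints $u,v,u+v\in C_+$ must be used in an essential way to prevent the ratio from blowing up. Once the problem is correctly reduced to $\langle u,v\rangle - |u||v|\geq \tfrac1{10}D^2$ in the plane, the computation is elementary.
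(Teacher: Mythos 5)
Your reduction of \eqref{eq:controlled_reverse_triangle} to $\langle u,v\rangle - |u||v| \ge A\,D^2$ is exactly the first step of the paper's own computation (the paper then goes one step further, to $(\langle u,v\rangle^2 - |u|^2|v|^2)/(2\langle u,v\rangle)$, but these are equivalent up to a bounded factor). The genuine gap is the one you flag yourself at the end: you simultaneously use the coordinates $u=(a_1,d)$, $v=(b_1,-d)$ — which presupposes that $u+v$ lies along the first Euclidean axis — \emph{and} a Euclidean--orthonormal frame in which the restricted Minkowski form is $\mathrm{diag}(\mu,-\mu')$. These are two different rotations of the plane, so the identities $D=d$, $\langle u,v\rangle=\mu a_1 b_1+\mu' d^2$ and ``numerator $=\mu\mu'\,d^2(a_1+b_1)^2$'' are not simultaneously valid, and the final ``optimize the one-variable ratio'' has no well-posed quantity behind it. The paper sidesteps this conflict by \emph{not} restricting to the two-plane: it works in (at most) three-dimensional Minkowski space with the \emph{standard} form, in which the Euclidean and Minkowski products are already simultaneously diagonal, normalizes $u=(t,x,y)$, $v=(1,z,0)$ via $O(n)$-invariance of the spatial part, computes $D^2=\|u\times v\|^2/\|u+v\|^2$ directly, and lands on $1/10$ after a monotonicity-in-$y^2$ argument.

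Your two-plane approach can however be repaired, and it then gives a \emph{shorter} proof with a better constant. Keep only the Minkowski-diagonalizing Euclidean-orthonormal frame, write $u=(a_1,a_2)$, $v=(b_1,b_2)$ (no alignment assumption on $u+v$), and use $D^2=(a_1b_2-a_2b_1)^2/\|u+v\|^2$. A short expansion gives the key identity
\[
\langle u,v\rangle^2 - |u|^2|v|^2 = \mu\mu'\,(a_1b_2-a_2b_1)^2 ,
\]
hence
\[
\langle u,v\rangle - |u||v| = \frac{\mu\mu'\,\|u+v\|^2}{\langle u,v\rangle + |u||v|}\,D^2 .
\]
Two further ingredients finish it. First, causality of $u$ and $v$ (not of $u+v$, which gives nothing here) yields $\mu' a_2^2\le \mu a_1^2$ and $\mu' b_2^2\le \mu b_1^2$, hence $\langle u,v\rangle=\mu a_1b_1-\mu'a_2b_2\le 2\mu a_1 b_1$, while $\|u+v\|^2\ge (a_1+b_1)^2\ge 4a_1 b_1$. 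Second — and this is the observation missing from your sketch — the spacelike eigenvalue of the restriction of the standard Minkowski form to \emph{any} two-plane through $0$ meeting the timelike cone is exactly $-1$: writing $e_i=(c_i,w_i)\in\R\times\R^n$ for a Euclidean-orthonormal basis of the plane and using $c_ic_j+w_i\cdot w_j=\delta_{ij}$, the Gram matrix of the Minkowski form is $2cc^T-I$ with $c=(c_1,c_2)$, with eigenvalues $2\|c\|^2-1$ and $-1$; so $\mu'=1$ and $\mu=2\|c\|^2-1\in(0,1]$. Plugging in, the ratio above is at least $\mu'=1$, so $\langle u,v\rangle-|u||v|\ge D^2$ and in fact $A=1$ works. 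Without the observation that $\mu'=1$, the argument only bounds the ratio below by $\mu'$, which a priori could be tiny — which is why, as you noticed, causality constraints ``must be used in an essential way to prevent the ratio from blowing up.''
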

\begin{proof} By (\ref{eq:reverse_triangle}) we can suppose that $u$ and $v$ are linearly independent.
	% Assume that the lemma is false in this case.  Then we can find sequences $u_i,v_i \in C_+$ with $D_i=d(u_i,\R(u_i+v_i)) > 0$ and $\left\|u_i+v_i\right\|, \left\langle u_i,v_i\right\rangle>0$ such that 
%\begin{equation*}\label{eq:crt_1}
%			\left|u_i+v_i \right| \cdot \frac{\left|u_i+v_i \right|- \left|u_i\right| - \left|v_i\right|}{D_i^2} \xrightarrow{i\rightarrow \infty} 0.
%\end{equation*}
By homogeneity and by the $\mathrm{O}(n)$-invariance in the spatial part of $\R^{n+1}$ we can assume that $n=2$ and that 
\begin{equation*}\label{eq:crt_2}
		u=(t,x,y), \ \  v=(1,z,0) 
\end{equation*}
with $0\leq t\leq 1 \; , \;  0\leq z\leq 1$ and $x^2+y^2\leq t^2$.
%\left|x_i\right|,\left|y_i \right|\leq t_i \leq 1$ and $0\leq z_i \leq 1$. 
Using the Cauchy-Schwarz and the reversed triangle inequality we compute
% for $u=(t,x,y)$ and $v=(1,z,0)$ in $C_+$
\
\begin{equation*}
\begin{split}
\left|u+v \right|- \left|u\right| - \left|v\right| & = \frac{\left|u+v \right|^2- \left(\left|u\right| + \left|v\right|\right)^2}{\left|u+v \right|+ \left|u\right| + \left|v\right|} \\
 & \geq  \frac{\left\langle u,v \right\rangle-\left|u\right|\left|v\right|}{\left|u+v \right|} \\
 & \geq  \frac{\left\langle u,v\right\rangle^2-\left|u\right|^2\left|v\right|^2}{2\left|u+v \right|\left\langle u,v\right\rangle} \\
 & =  \frac{(t-xz)^2-(t^2-x^2-y^2)(1-z^2)}{2\left|u+v \right|\left\langle u,v \right\rangle} \\
 & =  \frac{(tz-x)^2+y^2(1-z^2)}{2\left|u+v \right|\left\langle u,v \right\rangle}. \\
\end{split}
\end{equation*}
We also observe that for such $u$ and $v$
\begin{equation*}\label{eq:crt_4}
			D^2=\frac{\left\|u\times v\right\|^2}{\left\|u+v\right\|^2}=\frac{(tz-x)^2+y^2(1+z^2)}{\left\|u+v\right\|^2}   \geq  (tz-x)^2+y^2(1+z^2),
\end{equation*}
because  $\left\|u+v\right\|\geq 1$.
 We deduce that 
\begin{equation}\label{eq:crt_5}
	{\frac {|u+v|^2- |u+v|(|u|+|v|) } {D^2}}  \geq 
	{\frac{(tz-x)^2+y^2(1-z^2)}{(tz-x)^2+y^2(1+z^2)}} \cdot {\frac{1}{2(t-zx)}}
\end{equation}
For fixed $t,z,x$, the term on the right is monotone decreasing in $y^2$. Thus, we may assume that $y^2$ is as large as possible,  $y^2=t^2-x^2$.
Then 
$$(tz-x)^2 +y^2 (1-z^2)= (t-zx)^2$$
and  the right hand side of \eqref{eq:crt_5} is at least
$$\frac {t-zx} {(tz-x)^2+(t^2-x^2)(1+z^2)} \geq  \frac{1}{2} \cdot \frac{t-zx} {\left|tz-x \right| + (t-x) \cdot 2\cdot 2 } \;. $$
 Since $t-zx\geq  \left|tz-x \right| $ and $t-zx \geq t-x$ we arrive at  
 $$	\frac {|u+v|^2- |u+v|(|u|+|v|) } {D^2}  \geq \frac {t-zx} {10\cdot  (t-zx)} =\frac{1}{10} \;.$$
 This finishes the proof.
%The second factor being bounded from below by a positive constant implies that $(t_iz_i-x_i)^2<y_i^2(1+z_i^2)$ for sufficiently large $i$. In particular, we can assume that $y_i>0$. It follows that
%\begin{equation}\label{eq:crt_6}
		%	\frac{(t_iz_i-x_i)^2}{y_i^2\left\langle u_i,v_i\right\rangle} \xrightarrow{i\rightarrow \infty} 0,\ B_i:=\frac{1-z_i}{\left\langle u_i,v_i\right\rangle}=\frac{1-z_i}{t_i-x_iz_i} \xrightarrow{i\rightarrow \infty}0.
%\end{equation} 
%On the other hand, we have that
%\begin{equation*}\label{eq:crt_7}
%\begin{split}
%\frac{(t_iz_i-x_i)^2}{y_i^2\left\langle u_i,v_i\right\rangle} & \geq \frac{(t_i(z_i-1)+(t_i-x_i))^2}{(t_i^2-x_i^2)\left\langle u_i,v_i\right\rangle} \\
% & \geq \frac{2t_i(t_i-x_i)(z_i-1)+(t_i-x_i)^2}{2t_i(t_i-x_i)\left\langle u_i,v_i\right\rangle} \\
% & = -B_i+ \frac{t_i-x_i}{2t_i(t_i-x_iz_i)} \\
% & \geq -B_i+ \frac{1}{2}(1-x_iB_i) \\
% & \geq \frac{1}{2}-\frac{3}{2}B_i,
%\end{split}
%\end{equation*}
%in contradiction to (\ref{eq:crt_6}). Hence, the lemma is proven.
\end{proof}
 
We can now easily derive the following conclusion: 	
 	
% Since all Lorentzian products are pairwise isomorphic and all norms on $\R^n$ are  equivalent, we obtain the same conclusion 
 %  we  conclude:
 \begin{cor} \label{cor:geom}
  For any compact set $\mathcal S$ of Lorentzian bilinear forms $\lambda :\R^n \times \R^n \to \R^n$ there exists a constant $A=A(S)$ such that for any 
  $\lambda \in S$ the following holds true:
  
  If $\gamma :[a,b] \to \R^n$ is an $\lambda$-timelike  curve, if $w$ is the vector $\gamma (b)-\gamma (a)$ and 
  if $D$ denotes the maximal Euclidean distance of a point on $\gamma$ from the Euclidean line through $\gamma (a) $ and $\gamma (b)$, then
  % and all $\eta$-causal vectors $u,v\in \R^n$ we have
\begin{equation}\label{eq:controlled_reverse_triangle+}
\left|w \right| _{\lambda} \geq \mathcal L_{\lambda} (\gamma)   + \frac {A \cdot D^2} {\left|w \right| _{\lambda}}
\end{equation}
%holds, where $D$ is the Euclidean distance from $u$ and $v$ to the line spanned by $u+v$.
 \end{cor}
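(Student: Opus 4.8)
The plan is to reduce Corollary~\ref{cor:geom} to the pointwise inequality of Lemma~\ref{lem:cont_rev_tria_ineq} by a Riemann-sum / subdivision argument, applied to the constant bilinear form $\lambda$. Fix $\lambda\in\mathcal S$ and a $\lambda$-timelike curve $\gamma:[a,b]\to\R^n$. First I would note that, since $\mathcal S$ is compact and consists of Lorentzian forms, one can (after a $\lambda$-dependent linear change of coordinates, uniformly controlled over $\mathcal S$) assume $\lambda=\langle\cdot,\cdot\rangle$ is the standard Minkowski product; the constant $A$ coming out of Lemma~\ref{lem:cont_rev_tria_ineq} is then $\tfrac1{10}$ up to a factor controlled by the norms of these changes of coordinates over $\mathcal S$, which is where the dependence $A=A(\mathcal S)$ enters. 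Also, by replacing $\gamma$ by an arclength-type reparametrization we may assume $\gamma$ is Lipschitz and $\gamma'$ exists a.e.\ with $\gamma'(t)\in C_+$ a.e.

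Next I would set up the subdivision. Partition $[a,b]$ into $a=t_0<t_1<\dots<t_N=b$ and write $w_i=\gamma(t_i)-\gamma(t_{i-1})\in C_+$ (timelike, hence in the open cone, by the causality assumption — this is the reason we need $\lambda$-timelike rather than merely $\lambda$-causal, so that finite sums of the $w_i$ stay in $C_+$ and Lemma~\ref{lem:cont_rev_tria_ineq} applies). Applying \eqref{eq:controlled_reverse_triangle} inductively to $u=w_1+\dots+w_{i-1}$ and $v=w_i$, and using $|u+v|\le|u|+|v|$ to absorb the mixed factor, one gets
\begin{equation*}
|w|_\lambda \;\geq\; \sum_{i=1}^N |w_i|_\lambda \;+\; \frac{A}{|w|_\lambda}\sum_{i=1}^N D_i^2,
\end{equation*}
where $D_i$ is the Euclidean distance from the partial sums to the line spanned by $w$; here one has to be slightly careful that the telescoping of the ``defect'' terms survives, i.e.\ that the distances $D_i$ that appear are distances to the \emph{fixed} line through $\gamma(a)$ and $\gamma(b)$, which follows because $w=\sum w_i$ is the direction of that line and the inductive step at stage $i$ contributes the squared distance of $\gamma(t_{i-1})-\gamma(a)$ (a partial sum) to $\R w$. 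As the mesh of the partition tends to $0$, $\sum_i|w_i|_\lambda\to\mathcal L_\lambda(\gamma)$ by definition of Lorentzian length (lower semicontinuity of length under this kind of approximation, valid for a fixed smooth metric), and for a suitable sequence of partitions one can arrange $\sum_i D_i^2$ to stay bounded below by $\sup_t \operatorname{dist}(\gamma(t),\R w)^2 \cdot(1-o(1)) = D^2 - o(1)$, since some partition point approaches the point realizing the maximal distance $D$. Passing to the limit yields \eqref{eq:controlled_reverse_triangle+}.

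The main obstacle I anticipate is bookkeeping in the telescoping step: verifying that the accumulated defect terms really do give $\tfrac{A}{|w|_\lambda}\sum D_i^2$ with the $D_i$ measured against the correct (global) line, rather than against the line through consecutive partial-sum endpoints. A clean way around this is to observe that Lemma~\ref{lem:cont_rev_tria_ineq} applied to $u=S_{i-1}:=w_1+\cdots+w_{i-1}$ and $v=w_i$ gives a defect controlled by $D(S_{i-1},\R(S_{i-1}+w_i))^2 = D(S_{i-1},\R S_i)^2$, and then one uses that $S_i$ is itself close to the direction $w$ (their Euclidean angle is small once the mesh is small, because each $w_i$ lies in the cone $C_+$ and the cone has bounded aperture), so $D(S_{i-1},\R S_i)\geq D(S_{i-1},\R w)-o(1)\cdot\|S_{i-1}\|$. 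A second, more minor, point is to make the constant $A$ genuinely uniform over the compact family $\mathcal S$: this follows from compactness since the passage to standard Minkowski form depends continuously on $\lambda$ and $\tfrac1{10}$ is an absolute constant. With these two points handled, the rest is the routine limiting argument sketched above.
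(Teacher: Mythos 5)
Your reduction to the standard Minkowski product via compactness of $\mathcal S$ is fine and matches the paper, but the core of your argument --- the fine-subdivision/telescoping step --- does not work. When you apply Lemma~\ref{lem:cont_rev_tria_ineq} to $u=S_{i-1}$ and $v=w_i$, the defect you gain is $A\cdot\mathrm{dist}(S_{i-1},\R S_i)^2$, and $\mathrm{dist}(S_{i-1},\R S_i)\le \|S_i-S_{i-1}\|=\|w_i\|=O(\mathrm{mesh})$ because the line $\R S_i$ passes through $S_i$ and the origin. Hence $\sum_i D_i'^2=O(\mathrm{mesh})\to 0$: in the limit of fine partitions the quantitative term evaporates and you recover only the plain reverse triangle inequality $|w|\ge\mathcal L(\gamma)$. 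Your proposed repair --- that the Euclidean angle between $S_i$ and $w$ is small once the mesh is small --- is false: $S_i=\gamma(t_i)-\gamma(a)$ depends on the global shape of the curve, not on the mesh, and its deviation from the direction of $w$ is of the same order as the quantity $D$ you are trying to bound. There is also a sign slip where you invoke $|u+v|\le|u|+|v|$; in the causal cone the inequality goes the other way, and what you actually need to divide \eqref{eq:controlled_reverse_triangle} by $|u+v|=|S_i|$ is $|S_i|\le|w|$, which follows from the reverse triangle inequality applied to $S_i$ and $w-S_i$.

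The paper's proof uses a single, well-chosen subdivision point rather than a fine partition: take $u=\gamma(t^*)-\gamma(a)$ where $t^*$ realizes the maximal distance $D$ to the line through $\gamma(a)$ and $\gamma(b)$, and set $v=w-u$ (both are timelike since chords of a timelike curve in Minkowski space are timelike). One application of Lemma~\ref{lem:cont_rev_tria_ineq} gives $|w|\ge|u|+|v|+A D^2/|w|$ with $D$ measured against the correct line $\R w$, and the length of $\gamma$ is then disposed of by the fact that straight segments maximize Lorentzian length in Minkowski space, so $\mathcal L_\lambda(\gamma)\le|u|+|v|$. Note that this last inequality already shows that for \emph{every} partition $\sum_i|w_i|\ge\mathcal L_\lambda(\gamma)$, so no limiting argument over partitions is needed at all; the two-point partition at $t^*$ is exactly the degenerate case of your scheme in which the defect term survives.
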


\begin{proof}
Any $\lambda$ in $\mathcal S$ can be brought by a bounded linear transformation to the standard Minkowski product. By compactness, all these transformations change the Euclidean norm  (and thus the distance $D$ appearing in the formula) by a uniformly bounded factor. Thus it suffices to prove the statement
for the standard Minkowski product $\left\langle \cdot , \cdot \right\rangle$.

We first observe that in Minkowski space all chords of a timelike curve are timelike as well.
%every chord of our timelike curve is timelike. Indeed, since our curve is Lipschitz we can write
%\[
		%\gamma(t_1)-\gamma(t_2)= \int_{[t_1,t_2]} \gamma'(t) dt
%\]  
%where the integrand is contained in $C_+$ almost everyhwere. Characterizing $C_+$ as an intersection of half-spaces shows that also $\gamma(t_1)-\gamma(t_2)$ is contained in $C_+$.
Now, by homogeneity, we can assume $\gamma (a)=0$. We consider the point $u$ on $\gamma$ with maximal distance  to the line through $0$ and $\gamma (b)=w$.
Then for $v=w-u$ we can apply Lemma \ref{lem:cont_rev_tria_ineq}, to deduce
$$|w| \geq |u|+|v| + \frac {AD^2} {|w|}\;.$$
Since straight lines in Minkowski space maximize Lorentzian length, we have $\mathcal L(\gamma) \leq |u|+|v|$, and this finishes the proof.
\end{proof}

\section{First step to regularity} \label{sec: 1-2}
% {\subsection{$\mathcal C^{1,\frac 1 4}$-regularity}
We now embark on the proof of Theorem \ref{thm: main}. The statement is local
and we will  restrict from now on to a chart $U$ as in Section \ref{sec:not}. 

We will need to compare Lorentzian lengths with respect to different Lorentzian metrics and will rely on the following observation, a direct analog
of \cite[Lemma~3.1]{Yaman}. The proof is obtained by integration.
% is easily proven by integration
%and will be used several times.

\begin{lem}\label{lem:yaman} Let $\varepsilon>0$ be sufficiently small and let $g_1,g_2$ be Lorentzian metrics on $U$ such that $|g_1(v,v)-g_2(v,v)|< \varepsilon$ for all points in $U$ and all unit vectors $v$. Let $\gamma: [0,\delta] \rightarrow U$ be a curve parametrized by Euclidean arclength which is causal with respect to $g_1$ and $g_2$. Then the Lorentzian lengths of $\gamma$ with respect to $g_1$ and $g_2$ can be compared as $|\Le_1(\gamma)-\Le_2(\gamma)|\leq\delta \cdot \sqrt{\varepsilon}$. 
	
	%\cl{Moreover, if $g_i(\gamma'(t),\gamma'(t))>a>0$ for some small  $a>0$ and all times $t$, then the estimate improves to
	%\[
	%|\Le_1(\gamma)-\Le_2(\gamma)|\leq \frac 2 {\sqrt a} \cdot \delta \cdot \varepsilon.
	%\] \todo{Do we still need or want to keep this?}}
	
\end{lem}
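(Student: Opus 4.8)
The plan is to prove \lref{lem:yaman} by the pointwise estimate that the excerpt announces (``the proof is obtained by integration''). The key observation is an elementary inequality for a single pair of symmetric bilinear forms: if $|g_1(v,v)-g_2(v,v)|<\varepsilon$ for all unit vectors $v$, then for any vector $w$ which is causal for both metrics one has $\bigl|\sqrt{g_1(w,w)}-\sqrt{g_2(w,w)}\bigr|\le \sqrt{\varepsilon}\,\|w\|$. First I would establish this. By homogeneity write $w=\|w\|v$ with $v$ a unit vector, so $g_i(w,w)=\|w\|^2 g_i(v,v)$ and it suffices to show $\bigl|\sqrt{g_1(v,v)}-\sqrt{g_2(v,v)}\bigr|\le\sqrt{\varepsilon}$ for $v$ a unit vector causal for both $g_1,g_2$. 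Setting $a=g_1(v,v)\ge 0$, $b=g_2(v,v)\ge 0$ with $|a-b|<\varepsilon$, the numerical inequality $|\sqrt a-\sqrt b|\le\sqrt{|a-b|}$ (valid for all $a,b\ge 0$, since $(\sqrt a-\sqrt b)^2\le|a-b|$ as $a+b\ge 2\sqrt{ab}$ gives $(\sqrt a-\sqrt b)^2=a+b-2\sqrt{ab}\le a+b-2\min(a,b)=|a-b|$) yields the claim.

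Next I would apply this pointwise along the curve and integrate. For $\gamma\colon[0,\delta]\to U$ parametrized by Euclidean arclength, we have $\|\gamma'(t)\|=1$ for a.e.\ $t$, and $\gamma'(t)$ is causal for both $g_1$ and $g_2$ for a.e.\ $t$ by hypothesis. Therefore for a.e.\ $t$,
\[
	\Bigl|\sqrt{g_1(\gamma'(t),\gamma'(t))}-\sqrt{g_2(\gamma'(t),\gamma'(t))}\Bigr|\le\sqrt{\varepsilon}\,\|\gamma'(t)\|=\sqrt{\varepsilon}.
\]
Integrating over $[0,\delta]$ and using $|\int f|\le\int|f|$ gives
\[
	|\Le_1(\gamma)-\Le_2(\gamma)|=\left|\int_0^\delta\Bigl(\sqrt{g_1(\gamma',\gamma')}-\sqrt{g_2(\gamma',\gamma')}\Bigr)\,dt\right|\le\int_0^\delta\sqrt{\varepsilon}\,dt=\delta\sqrt{\varepsilon},
\]
which is the assertion.

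There is essentially no hard part here; the statement is robust and the only mild points to be careful about are (i) that the integrands $\sqrt{g_i(\gamma'(t),\gamma'(t))}$ are measurable (they are, as compositions of the continuous metric coefficients with the $L^\infty$ function $\gamma'$, restricted to the full-measure set where $\gamma'$ exists and is causal) and bounded (since $\gamma$ is bilipschitz, $\|\gamma'\|=1$ a.e., and the $g_i$ are continuous on the relatively compact $U$), so the Lorentzian lengths are well-defined finite integrals; and (ii) the phrase ``$\varepsilon$ sufficiently small'' in the hypothesis is only needed to guarantee that the class of curves causal for both metrics is nonempty / that the comparison is being made in a regime where both $g_i$ are genuinely Lorentzian perturbations of a fixed reference form — it plays no role in the displayed estimate itself. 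If one wants to avoid invoking measurability subtleties, one can alternatively note that both $t\mapsto\Le_i(\gamma|_{[0,t]})$ are Lipschitz in $t$ with the pointwise derivative bound, and compare the two Lipschitz functions; but the direct integration above is the cleanest route.
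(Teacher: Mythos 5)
Your proof is correct and is exactly the argument the paper intends — the paper only says ``the proof is obtained by integration,'' and your pointwise inequality $|\sqrt a-\sqrt b|\le\sqrt{|a-b|}$ applied to the unit tangent vector, followed by integration over $[0,\delta]$, is the standard way to fill that in (it is also the argument of \cite[Lemma~3.1]{Yaman}, which the paper cites as the model). Nothing further is needed.
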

 %Thus,  we {\color{green} we may assume that
%$M=U$ is a small Euclidean ball around the origin $0$  in $\R^{n+1}$. We further assume that the  Lorentzian metric $g$ is Lipschitz continuous with Lipschitz constant $L$, thus $$|g_x(v,v)-g_y(v,v)| \leq L\cdot \|x-y\| \;,$$
%for all unit vectors $v$ and all $x,y\in U$.  
%Finally, we may assume 
% that $g_0$ is the standard Lorentzian Minkowski metric.

In this section we are going to prove

\begin{prop}\label{prop:half_H_cont}
	Any \extremal curve $\gamma :I\to U$ in $U$ parametrized by Euclidean arclength is $\mathcal C^{1,\frac 1 4}$.
\end{prop}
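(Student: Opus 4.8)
The plan is to verify condition~(2) of Lemma~\ref{lem:geo_diff_crit} with $\alpha=\frac14$. As $\gamma$ is a causal curve parametrized by Euclidean arclength, it is bilipschitz with constant $2$ (Section~\ref{sec:not}), so it suffices to find $C\ge 1$ and $h_0>0$ such that for every subinterval $[t_1,t_2]\subseteq I$ with $h:=t_2-t_1\le h_0$ the maximal subcurve $\gamma_h:=\gamma|_{[t_1,t_2]}$ stays within Euclidean distance $C h^{5/4}$ of the line through $p:=\gamma(t_1)$ and $q:=\gamma(t_2)$. Put $w:=q-p$, $u:=w/\|w\|$, and for $z\in\bar U$, $0\le h\le h_0$ let $\lambda_{z,h}$ be the constant form $\lambda_{z,h}(v,v):=g_z(v,v)+2Lh\|v\|^2$. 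For $h_0$ small this is Lorentzian, $\mathcal S:=\{g_z\}\cup\{\lambda_{z,h}\}$ is a compact family of Lorentzian forms bounded by some $C_1$ (i.e. $\mu(v,v)\le C_1\|v\|^2$ for $\mu\in\mathcal S$), and Corollary~\ref{cor:geom} gives a uniform $A=A(\mathcal S)>0$. Set $\lambda:=\lambda_{p,h}$. Since $\gamma_h$ and the segment $\sigma:=[p,q]$ lie within distance $h$ of $p$, the Lipschitz bound for $g$ yields $|g_x(v,v)-\lambda(v,v)|\le 3Lh\|v\|^2$ there, and every $g$-causal vector at such a point is $\lambda$-timelike; in particular $\gamma_h$ is $\lambda$-timelike, hence (chords of timelike curves in Minkowski space are timelike, see the proof of Corollary~\ref{cor:geom}) $w$ is future $\lambda$-timelike and $\mathcal L_\lambda(\sigma)=|w|_\lambda$.

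Let $D$ be the maximal Euclidean distance of a point of $\gamma_h$ from the line through $p$ and $q$. Corollary~\ref{cor:geom} applied to $\gamma_h$ gives
\begin{equation}\label{eq:half_H_key}
|w|_\lambda\ \ge\ \mathcal L_\lambda(\gamma_h)+\frac{A\,D^2}{|w|_\lambda}.
\end{equation}
The key difficulty is that $\sigma$ need not be $g$-causal, so it is not always an admissible competitor for the maximal curve $\gamma_h$. I would split into two cases. If $\sigma$ is \emph{not} $g$-causal, pick $x_0\in[p,q]$ with $g_{x_0}(u,u)<0$; since $\|x_0-p\|\le h$ this gives $g_p(u,u)<g_{x_0}(u,u)+Lh<Lh$, so $\lambda(w,w)=\|w\|^2\bigl(g_p(u,u)+2Lh\bigr)<3Lh\|w\|^2\le 3Lh^3$. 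Inserting $|w|_\lambda<\sqrt{3L}\,h^{3/2}$ into \eqref{eq:half_H_key} and dropping the nonnegative term $\mathcal L_\lambda(\gamma_h)$ gives $A D^2\le|w|_\lambda^2<3Lh^3$, hence $D<\sqrt{3L/A}\,h^{3/2}\le\sqrt{3L/A}\,h^{5/4}$ for $h\le 1$. If instead $\sigma$ \emph{is} $g$-causal, maximality gives $\mathcal L_g(\sigma)\le\mathcal L_g(\gamma_h)$; as $\sigma$ and $\gamma_h$ are causal for both $g$ and $\lambda$ with Euclidean length $\le h$, Lemma~\ref{lem:yaman} with $\varepsilon=3Lh$ gives $\mathcal L_g(\sigma)\ge|w|_\lambda-\sqrt{3L}\,h^{3/2}$ and $\mathcal L_g(\gamma_h)\le\mathcal L_\lambda(\gamma_h)+\sqrt{3L}\,h^{3/2}$, so with \eqref{eq:half_H_key}
\begin{equation*}
|w|_\lambda-\sqrt{3L}\,h^{3/2}\ \le\ \mathcal L_g(\gamma_h)\ \le\ |w|_\lambda-\frac{A D^2}{|w|_\lambda}+\sqrt{3L}\,h^{3/2}.
\end{equation*}
Thus $A D^2/|w|_\lambda\le 2\sqrt{3L}\,h^{3/2}$, and since $|w|_\lambda\le\sqrt{C_1}\,\|w\|\le\sqrt{C_1}\,h$ we get $D^2\le\frac{2\sqrt{3LC_1}}{A}h^{5/2}$, i.e. $D\le\bigl(\frac{2\sqrt{3LC_1}}{A}\bigr)^{1/2}h^{5/4}$.

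Taking $C$ to be the maximum of $1$ and the two constants above, and shrinking $h_0$ so that all smallness requirements hold (that $\lambda$ be Lorentzian, that $\varepsilon=3Lh$ be admissible in Lemma~\ref{lem:yaman}, and that $h_0\le 1/C$), condition~(2) of Lemma~\ref{lem:geo_diff_crit} follows — up to replacing the chord by the line it spans, which is harmless since that lemma's proof only compares points of $\gamma$ with the feet of their perpendiculars on the chords — and therefore $\gamma$ is $\mathcal C^{1,\frac14}$. The main obstacle, as flagged in the introduction, is exactly the possibly non-causal Euclidean chord; the resolution is the observation that its failure to be $g$-causal forces $|w|_\lambda$ down to order $h^{3/2}$, so the quantitative triangle inequality \eqref{eq:controlled_reverse_triangle} already controls $D$ in that case. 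The exponent $\frac14$ is surely not optimal, but the argument does yield the $\mathcal C^1$-regularity of maximal curves that is all one needs in the sequel.
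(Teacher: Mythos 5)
Your proof is correct, and the overall skeleton (freeze the metric at the left endpoint of the chord and widen the light cone, apply the quantitative reverse triangle inequality of Corollary~\ref{cor:geom} and the length-comparison Lemma~\ref{lem:yaman}, then conclude via Lemma~\ref{lem:geo_diff_crit}) is the same as the paper's. Where you genuinely diverge is in the logical shape and in the treatment of the possibly non-causal chord. The paper argues by contradiction: assuming $D\ge C h^{5/4}$ for an arbitrarily large constant $C$, it feeds this back through \eqref{eq:controlled_reverse_triangle+} to get $|u|_{g^h}\ge \sqrt A\, C\, h^{1/4}$, and from this concludes that $g_x(u,u)\ge \tfrac12 A C^2 h^{1/2}>0$, so the chord is automatically $g$-timelike once $C$ is large; the case distinction thus never appears. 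You instead give a direct proof and split according to whether the chord is $g$-causal. Your Case~1 observation -- that non-causality of the chord forces $|w|_\lambda\le\sqrt{3L}\,h^{3/2}$, so the quantitative triangle inequality alone pins $D$ down to $O(h^{3/2})$ without any appeal to maximality -- is in effect the contrapositive of the paper's ``large $C$ makes the chord timelike'' trick, but stated in a way that makes the mechanism more transparent and even yields a better exponent in that subcase. Your Case~2 then matches the paper's remaining length comparison. A further cosmetic difference is the choice of auxiliary product: you use $\lambda_{p,h}=g_p+2Lh\|\cdot\|^2$ frozen at $p$, while the paper first normalizes $g_p$ to Minkowski and takes $g^h=\langle\cdot,\cdot\rangle+4Lh\,v_t w_t$; both achieve the same ``causal for $g$ implies timelike for the frozen form'' property, and yours avoids the coordinate normalization. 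Two small points worth tightening for a fully rigorous write-up: (i) Lemma~\ref{lem:yaman} as stated requires the comparison on all of $U$, so you should say explicitly that you apply it on the ball $B_h(p)$, where the estimate $|g_x-\lambda|\le 3Lh$ actually holds and where both $\gamma_h$ and $\sigma$ live; (ii) the line-versus-segment discrepancy in Lemma~\ref{lem:geo_diff_crit}(2) is indeed harmless (the $(2)\Rightarrow(1)$ direction only uses the point projecting to the midpoint of the segment, where the two distances coincide), but this same small gap is present in the paper's own statement of \eqref{eq:7}, and either that remark or a short bilipschitz/intermediate-value argument showing $d(\cdot,\text{segment})\le 2\,d(\cdot,\text{line})$ for points of $\gamma_h$ should be recorded.
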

\begin{proof}
 Assume the contrary. By Lemma \ref{lem:geo_diff_crit} we find for any arbitrarily large 
$C>0$, an arbitrarily small $h>0$, some  subinterval $[t,t+h]\subset I$, and some point $q$ on $\gamma ([t,t+h])$ such that 
\begin{equation}\label{eq:7}
D \geq C h^{1+\frac{1}{4}},
\end{equation}
where $D$ denotes the Euclidean distance from $q$ to the Euclidean line between
$\gamma (t)$ and $\gamma (t+h)$.

The vector $$u:=\frac {\gamma (t+h)-\gamma (t)} h$$
satisfies $\|u\|\leq 1$ since $\gamma$ is $1$-Lipschitz.

We consider the restriction $\gamma_{t,h}=\gamma_{|[t,t+h]}$ and  the curve 
  $\tilde \gamma =\tilde \gamma_{t,h}  :[0,h]\to U$ given by 
 $\tilde \gamma (s):= \gamma(t)+s\cdot u$.  The curve $\tilde\gamma$ has the same endpoints as the \extremal curve $\gamma_{t,h}$.  A contradiction  proving 
  Proposition \ref{prop:half_H_cont} will be provided by showing that for $C\geq C(L)$ large enough the curve $\tilde \gamma$ is timelike and has larger Lorentzian length than $\gamma_{t,h}$.
 
 We may assume that $t=0$, that $\gamma (t)=0\in U$
and that $g_0$ is the standard Minkowski product  $g_0= \left\langle \cdot, \cdot \right\rangle$,  see Section \ref{sec:not}. We set $\gamma_{h}:=\gamma_{t,h}$. 
We consider the auxiliary Lorentzian products on $\R^n$  given by     
 % More precisely, we will show that if $C$ is chosen sufficiently large, then $\tilde \gamma$ is timelike and has longer Lorentzian length than $\gamma$.
$$g ^{h}  (v,w) =\left\langle v,w\right\rangle + 4\cdot L\cdot h \cdot  v_t \cdot w_t \; ,$$ 
 where as before $v_t$ denotes the first coordinate of $v\in \R^n$.

 Then, for all $x$ in $\R^n$ with $\|x\| \leq h$ and for all unit vectors $v\in \R^n$ we have
 \begin{enumerate}
 \item $|g_x(v,v)-g^h(v,v)|\leq 5 L h$, and
 \item if $g_x (v,v)> 0$ then $g^{h} (v,v) >0$,
 \end{enumerate} 
 compare \cite[Thm.\ 1.15]{Chrusciel_Grant:12}. In particular, any $g$-causal curve in the ball $B_h(0)\subset \R^n$ is $g^h$-causal.% and  any $g^{-\delta}$-causal curve is $g$-causal. 

Hence, the curve $\gamma _{h}$ is timelike with respect to the Minkowski product
 $g^{h}$. Thus also $u$ and the straight segment  $\tilde \gamma$ are $g^{h}$-timelike. 
 
 Note that all our auxiliary Lorentzian products $g^h$ are contained in a compact set of Lorentzian products, fixed independently of $h$. Thus, we find a constant $A>0$ such that the conclusion of Corollary \ref{cor:geom} is valid for all such $g^h$, and so we deduce:
 \begin{equation} \label{eq:diff}
 \mathcal L _{g^{h}} (\tilde \gamma)  = |\gamma (h)-\gamma (0)| _{g^{h}} =h \cdot |u|_{g^{h} } \geq \mathcal L _{g^{h}} (\gamma)   +  \frac {A\cdot D^2} {h\cdot |u| _{g^{h}}}\,.
 \end{equation}
 Inserting $D\geq C\cdot h^{1+ \frac  1 4}$, we obtain
 $$|u|_{g^{h}} \geq \frac {\sqrt A \cdot D }  {h}  \geq \sqrt A\cdot C\cdot h^{\frac 1 4}\;.$$
 Thus, for all $x\in B_{h} (0)$  we have
 $$g_x (u,u) \geq AC^2 \cdot h^{\frac 1 2} -  5 L  \cdot h \geq \frac 1 2 AC^2 \cdot h^{\frac 1 2}$$
 for all sufficiently small $h$. Therefore the curve $\tilde \gamma$ is $g$-timelike. Moreover, we have
\begin{equation*}
\begin{split}
\Le(\tilde \gamma) -\Le(\gamma_{h}) =&(\Le_{g^{h}} (\tilde \gamma ) -\Le_{g^{h}} (\gamma_{h}))\\ &+\underbrace{(\Le(\tilde \gamma) -\Le_{g^{h}} (\tilde \gamma))+
(\Le_{g^{h}} (\gamma_{h} ) -\Le(\gamma_{h}))}_{=:R}.
\end{split}
\end{equation*}
%Since $\gamma$ is extremal, $\Le(\tilde \gamma)-\Le(\gamma_{h}) \leq 0$.
By Lemma \ref{lem:yaman}, we deduce that the absolute value of $R$ is at most 
$2\sqrt{5L}h^{\frac 3 2}$. Since $\gamma$ is \extremal, $\Le(\tilde \gamma)-\Le(\gamma_{h}) \leq 0$. Applying \eqref{eq:diff}, we find a constant $C_1>0$ such that
$$\frac {AD^2} {h |u|_{g^{h}} }\leq C_1 \cdot h^{\frac 3 2}$$
$$AD^2 \leq C_1 \cdot h^{\frac 5 2} \cdot |u|_{g^h}$$ 
Since $|u|_{g^h}\leq 2$ we deduce that
$D\leq C_2 \cdot h^{ 1 + \frac 1 4}$ for some constant $C_2$ (independent of $\gamma$, $t$ and $h$), and thus a contradiction, if $C$ has been chosen large enough. This finishes the proof.
\end{proof}

 %From \eqref{eq:8} we thus get 
%$$AD^2\leq \left|u\right|_{h} \cdot 2h^{\frac 3 2}\leq C_1 h^{\frac 5 2}$$
%for some $C_1>0$ independent of $t$ and $h$. Therefore, $D$ is bounded by $C_1\cdot h^{1+\frac 1 4}$ for some constant $C_1>0$.
%Applying Lemma \ref{lem:geo_diff_crit} we deduce that our  curve $\gamma$ is $C^{1,\frac 1 4}$. 

Easily adapting the arguments we provide
\begin{proof}[Proof of Proposition \ref{prop_hoelder}]
 The proof follows literally as above by redefining $g^h$ as
 $$g ^{h}  (v,w) =\left\langle v,w\right\rangle + 4\cdot L\cdot h^\alpha \cdot  v_t \cdot w_t \; ,$$
 and replacing $\frac{1}{4}$ by  $\frac{\alpha}{4}$ in \eqref{eq:7}.
\end{proof}

\section{Regularity away from lightlike vectors} \label{sec: 1-1}

	In this section we
	% restrict our attention to the timelike part and 
	prove,  again following \cite{Yaman}:

\begin{thm}\label{thm:extremal_timelike}
Let $\gamma: I\to U$ be a maximal causal curve parametrized by Euclidean arclength.
 If $|\gamma ' (t)| _g >0$ for all $t\in I$, then $\gamma$ is $\mathcal C^{1,1}$.
\end{thm}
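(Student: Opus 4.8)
The plan is to follow the same strategy as in Proposition~\ref{prop:half_H_cont}, but with a sharper comparison that exploits the uniform timelikeness to reach the optimal exponent $\alpha=1$. By Proposition~\ref{prop:half_H_cont} we already know $\gamma$ is $\mathcal C^{1,\frac14}$, in particular $\mathcal C^1$; since $|\gamma'(t)|_g>0$ for all $t\in I$ and $\gamma$ is $\mathcal C^1$ on the compact parameter interval (working locally we may assume $I$ compact), there is a uniform lower bound $g_{\gamma(t)}(\gamma'(t),\gamma'(t))\ge 2c_0>0$. By continuity of $\gamma'$ and of $g$, for a sufficiently short subinterval $[t,t+h]$ and all $x\in B_{h}(\gamma(t))$ the bilinear form $g_x$ is timelike on all tangent directions of $\gamma$ near $t$, and the relevant Minkowski products stay in a fixed compact set $\mathcal S$ of Lorentzian forms, so Corollary~\ref{cor:geom} applies with a uniform constant $A=A(\mathcal S)$.

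The key step is the length comparison. Fix $[t,t+h]\subset I$; after a coordinate change assume $t=0$, $\gamma(0)=0$, and $g_0=\langle\cdot,\cdot\rangle$. Let $q$ be the point of $\gamma([0,h])$ farthest from the chord, $D$ its Euclidean distance to the line through $\gamma(0),\gamma(h)$, and $\tilde\gamma(s)=s\,u$ with $u=(\gamma(h)-\gamma(0))/h$ the straight chord. Because $g$ is $L$-Lipschitz, on $B_h(0)$ we have $|g_x(v,v)-g_0(v,v)|\le Lh$ for unit $v$; combined with the uniform lower bound $g_0(\gamma'(t),\gamma'(t))\ge 2c_0$ this shows, for $h$ small, that $\gamma_{h}:=\gamma_{|[0,h]}$ is $g_0$-timelike, hence so are $u$ and $\tilde\gamma$, and that the chord $\tilde\gamma$ is $g$-timelike. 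Now I would compare $\Le_g(\tilde\gamma)$ and $\Le_g(\gamma_h)$ by passing through $g_0$: Corollary~\ref{cor:geom} for $g_0$ gives
\[
\Le_{g_0}(\tilde\gamma)=h|u|_{g_0}\ \ge\ \Le_{g_0}(\gamma_h)+\frac{A D^2}{h|u|_{g_0}},
\]
while Lemma~\ref{lem:yaman} with $\varepsilon=Lh$ bounds $|\Le_g(\tilde\gamma)-\Le_{g_0}(\tilde\gamma)|$ and $|\Le_g(\gamma_h)-\Le_{g_0}(\gamma_h)|$ each by $h\cdot\sqrt{Lh}=\sqrt{L}\,h^{3/2}$. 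This is the same estimate as in Proposition~\ref{prop:half_H_cont} and would again only give $D\lesssim h^{5/4}$, i.e.\ the exponent $\frac14$, which is not enough. To gain the full exponent $1$ one must, following \cite{Yaman}, not estimate the metric difference crudely but instead re-expand the length of the chord and of the curve in terms of the \emph{same} varying Minkowski metric $g_{\gamma(s)}$ along the curve, so that the first-order term in $h$ cancels and only a genuinely second-order-in-$h$ error remains. Concretely: write $\Le_g(\gamma_h)=\int_0^h|\gamma'(s)|_{g_{\gamma(s)}}\,ds$ and compare, pointwise in $s$, the integrand with $|u|_{g_{\gamma(s)}}$; since $\gamma$ is $\mathcal C^1$ with $\gamma'(s)=u+O(h^{1/4})$ only naively, one uses instead that $\int_0^h(\gamma'(s)-u)\,ds=0$ (the chord/curve share endpoints) together with the uniform timelikeness to get that the deviation contributes at quadratic order, and bounds the variation of $g$ along $\gamma$ by $Lh$. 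This produces an inequality of the form
\[
0\ \ge\ \Le_g(\tilde\gamma)-\Le_g(\gamma_h)\ \ge\ \frac{A D^2}{h|u|_{g_0}}-C_1 h^{3}\cdot(\text{something}),
\]
more precisely one arrives at $A D^2\le C_1 h^{4}$, hence $D\le C_2 h^{2}=C_2 h^{1+1}$; Lemma~\ref{lem:geo_diff_crit} with $\alpha=1$ then yields $\gamma\in\mathcal C^{1,1}$.

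The main obstacle is exactly this sharpening: the error term $R$ coming from the non-constancy of $g$ must be shown to be $O(h^{3})$ rather than the $O(h^{3/2})$ that Lemma~\ref{lem:yaman} gives directly. The mechanism is that a Lipschitz bound on $g$ gives an $O(h)$ pointwise bound on $g_{\gamma(s)}-g_0$, which contributes $O(h)$ to each velocity norm, hence naively $O(h^2)$ to each length over an interval of size $h$; the extra factor $h$ comes from the fact that what enters the comparison between curve and chord is not the length difference itself but a \emph{difference of differences} in which the $O(h^2)$ terms that are common to curve and chord cancel, leaving $O(h^3)$. Making this cancellation rigorous — identifying which terms are shared between $\Le_g(\gamma_h)$ and $\Le_g(\tilde\gamma)$ and bounding the remainder using that both curves lie in $B_h(0)$ and have the same endpoints — is where the uniform lower bound $|\gamma'|_g\ge\sqrt{2c_0}$ is essential, since it keeps all Lorentzian norms and their reciprocals bounded and the square-root $v\mapsto|v|_g$ uniformly smooth along the relevant directions. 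Once this quadratic error estimate is in place, the rest is a routine repetition of the bootstrap argument of Proposition~\ref{prop:half_H_cont} with the improved powers of $h$, concluding via Lemma~\ref{lem:geo_diff_crit}.
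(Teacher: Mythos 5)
Your high-level diagnosis is right (the crude $\sqrt{\varepsilon}$-comparison of Lemma~\ref{lem:yaman} caps the exponent at $\tfrac14$, and one must compare curve and chord against the \emph{varying} metric along the curve so that the first-order term cancels), but the proposal leaves precisely the decisive step unproven, and the mechanism you sketch for it does not deliver the required order. The error term $R$ is, to leading order, $\int_0^h\bigl[(g_{\gamma(s)}-g_0)(\gamma',\gamma')/2|\gamma'|_{g_0}-(g_{su}-g_0)(u,u)/2|u|_{g_0}\bigr]ds$ plus bounded corrections. The piece coming from $g_{\gamma(s)}-g_{su}$ is $O(h\cdot D)$, which is harmless, but the piece coming from replacing $\gamma'(s)$ by $u$ inside the weight $g_{\gamma(s)}-g_0=O(s)$ is of size $O(h)\int_0^h\|\gamma'(s)-u\|\,ds$; the cancellation $\int_0^h(\gamma'(s)-u)\,ds=0$ that you invoke is exact only against a \emph{constant} weight, so it does not kill this term. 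Using the available regularity $\|\gamma'(s)-u\|=O(h^{\beta})$ from a current exponent $\mathcal C^{1,\beta}$ only gives $R=O(h^{2+\beta})$, hence $D\lesssim h^{(3+\beta)/2}$, i.e.\ the bootstrap $\beta\mapsto(1+\beta)/2$, which converges to $1$ without ever reaching it. So the asserted $AD^2\le C_1h^4$ is not established, and no finite iteration of your scheme yields $\mathcal C^{1,1}$.

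The paper closes this gap by a different device, not by sharpening the route through Corollary~\ref{cor:geom} (which is not used in this section at all). One reparametrizes the chord as $\eta(t)$, the $g_0$-orthogonal projection of $\gamma(t)$ onto the line spanned by $u$, and writes $\gamma'=\eta'+v'$ with $v'$ in the $g_0$-orthogonal complement $H$ of the timelike vector $u$. Two things then happen simultaneously: the dangerous cross term $g_{\gamma(t)}(\eta',v')$ is $O(t\|v'(t)\|)$ because it vanishes for $g_0$ and $g$ is Lipschitz; and, since $H$ is uniformly spacelike, the quadratic term $-g_{\gamma(t)}(v',v')\gtrsim\|v'\|^2$ enters the pointwise comparison of $|\eta'|_{\eta(t)}$ and $|\gamma'|_{\gamma(t)}$ with a \emph{favorable sign}. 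Integrating and using maximality then yields the coercive inequality $\int_0^h\|v'\|^2\lesssim\int_0^h(\|v\|+t\|v'\|)$, which integration by parts and Cauchy--Schwarz convert into $\|v\|_\infty\lesssim h^2$ in one shot. It is this coercive $\int\|v'\|^2$ term — absent from your argument, where the only quantitative gain is the $AD^2/|w|$ of Corollary~\ref{cor:geom}, controlling $D$ but not $\int\|\gamma'-u\|^2$ — that lets the estimate close at the optimal exponent. To repair your proof you would need to supply this (or an equivalent) coercivity mechanism rather than an improved bound on $R$ alone.
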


\begin{proof} The statement is local and we may assume that $I$ is compact. By continuity 
	of $\gamma'$, Proposition \ref{prop:half_H_cont}, we find some $e>0$ such that $|\gamma'(t)|_g >2\cdot e$, for all $t\in I$. Restricting $\gamma$ to a sufficiently small
	subinterval, we see that all linear segments between points on $\gamma$ are timelike curves.

	Assume that $\gamma$ is not $\mathcal C^{1,1}$. Relying on Lemma \ref{lem:geo_diff_crit} and arguing as in the proof of Proposition \ref{prop:half_H_cont}, we arrive at the following situation. For an arbitrarily large $C>0$ we find an arbitrarily small $h>0$ with the following property.  The restriction of $\gamma $ to an interval $[a,a+h]\subset I$ contains a point 
	at distance 
	\begin{equation} \label{eq: D}
 	D\geq C\cdot h^2
 	\end{equation}
	from the linear segment $\eta$ connecting $\gamma (a)$ and $\gamma (a+h)$.  We may assume without loss of generality that $a=0$ and $\gamma(0)=0 \in U$.

	Denote by $u$ the Euclidean unit vector
	$$u:= \frac {\gamma (h)- \gamma (0)} {\|\gamma (h)- \gamma (0)\|}$$
	and let the hyperplane $H$ denote the $g_0$-orthogonal complement of $u$ in $\R^n$.
	Since $u$ is timelike, the hyperplane $H$ consists of $g_0$-spacelike vectors.
	Moreover, by compactness, we find some constant $C_1>0$, depending only on $e$, such that
	\begin{equation*}
	-g_0 (w,w) \leq C_1 \cdot \|w\|^2
	\end{equation*}
	for all $w\in H$.  By continuity of $g$, we may assume, adjusting $C_1$, that also for all $x \in \R^n$  with $\|x\| \leq h$ and all $w\in H$  we have the inequality 
\begin{equation}  \label{eq: third}
-g_x (w,w) \leq C_1 \cdot \|w\|^2
\end{equation}

	% not lightlike, $u \notin H$.  
	
 Consider the unique parametrization $\eta :[0,h] \to U$ of the linear segment $\eta$ such that
 $$v(t):= \gamma (t)-\eta (t)$$ 
 is $g_0$-orthogonal  to $u$.  In other words, $\eta (t)$ is the $g_0$-orthogonal projection of $\gamma (t)$ onto the line spanned by $u$.  
 
 The curves $\eta$ and $v$ are well-defined and $\mathcal C^{1,\frac 1 4}$.  Moreover,
 $v(0)=v(h)=0$. By the uniform continuity of $v'$, for an arbitrarily small $\rho >0$, the inequality  
  $$\|v'(t)\| +\|v(t)\| <\rho$$ 
 holds true for all $t\in [0,h]$ once $h$ has been chosen small enough, compare \cite[Lemma 2.1]{Yaman}.
	
For sufficiently small $h$ (and thus, sufficiently small $\rho$), this implies

 \begin{equation}\label{eq:18}
 \frac 1 2 < \left\|\eta'(t)\right\|<2  \; \; \;   \text{and} \; \; \;
  %(recall that $\gamma$ is parametrized by Euclidean arclength) and, together with our assumption 
  |\eta ' (t)| _{\gamma (t)} >e\;.
  \end{equation}
  % and the fact that $\gamma$ is $\mathcal C^{1}$, that 
%\begin{equation}\label{eq:18}
%			C_{\gamma} < \left|\eta'(t)\right|_{\gamma(t)} 
%\end{equation}
%for some $C_{\gamma}>0$.

 Now we can  estimate for all $h$ small enough:
\begin{equation*}\label{eq:19}
\begin{split}
&\left|\eta'(t)\right|_{\eta(t)}-|\gamma'(t)|_{\gamma(t)} \\
 &= \left|\eta'(t)\right|_{\eta(t)} - \sqrt{\left|\eta'(t)\right|^2_{\gamma(t)}+2g_{\gamma(t)}(\eta'(t),v'(t))+g_{\gamma(t)}(v'(t),v'(t))} \\
											&\geq \left|\eta'(t)\right|_{\eta(t)} - \left|\eta'(t)\right|_{\gamma(t)} -\frac{g_{\gamma(t)}(\eta'(t),v'(t))}{\left|\eta'(t)\right|_{\gamma(t)}}-\frac{g_{\gamma(t)}(v'(t),v'(t))}{2\left|\eta'(t)\right|_{\gamma(t)}} \\
											&\geq - 2L \cdot \left\|v(t)\right\|-\frac{\left|g_{\gamma(t)}(\eta'(t),v'(t))\right|}{e}-\frac{g_{\gamma(t)}(v'(t),v'(t))}{2e}. \\
\end{split}
\end{equation*}
Here we have used  $\sqrt{1+x}\leq1+\frac x 2$ for $x>-1$ in the first inequality and the $L$-Lipschitz continuity of  $g$ in the second inequality. 

We  estimate the second and third summand separately.
In order to deal with the second term, we observe that  for a unit vector $w \in H$:
\begin{equation*}\label{eq:21}
	% \begin{split}
	% \left|\cs{4}g_{\gamma(t)}(u,w)\right|&=|g_{\gamma(t)}(u+w,u+w)-g_0(u+w,u+w)  \\
	% 																& \ \ \ +g_0(u-w,u-w)-g_{\gamma(t)}(u-w,u-w)| \\
	% 																&\leq C_3 \left\|\gamma(t)\right\| \\
	\left|g_{\gamma(t)}(u,w)\right|\leq L \cdot  \|\gamma(t) \| \leq L\cdot t
	% \end{split} 
\end{equation*}
due to  $g_0(u,w)=0$, the  Lipschitz continuity of $g$ and the $1$-Lipschitz continuity of $\gamma$. Therefore,
\begin{equation*}
	- \left|g_{\gamma(t)}(\eta'(t),v'(t))\right| \geq - {2 L} \cdot t  \cdot  \left\|v'(t)\right\|.
\end{equation*}

Estimating the third summand by \eqref{eq: third} we find

% For the third term,  note that $v'(t)$ is spacelike with respect to $g_0$ and obtain
%\begin{equation*}\label{eq:20}
%\begin{split}
%-\frac{g_{\gamma(t)}(v'(t),v'(t))}{2\left|\eta'(t)\right|_{\gamma(t)}} &= \frac{\left|v'(t)\right|^2_0}{2\left|\eta'(t)\right|_{\gamma(t)}}+\frac{g_{0}(v'(t),v'(t))-g_{\gamma(t)}(v'(t),v'(t))}{2\left|\eta'(t)\right|_{\gamma(t)}} \\
%&\geq C_2 \left\|v'(t)\right\|^2- \frac{L}{2C_{\gamma}} \left\|\gamma(t)\right\| \left\|v'(t)\right\|^2.\\
%\end{split}
%\end{equation*}
%for some $C_2>0$.

% In order to deal with the second term we observe that for a unit vector $w \in H$ we have
%\begin{equation*}\label{eq:21}
% \begin{split}
% \left|\cs{4}g_{\gamma(t)}(u,w)\right|&=|g_{\gamma(t)}(u+w,u+w)-g_0(u+w,u+w)  \\
% 																& \ \ \ +g_0(u-w,u-w)-g_{\gamma(t)}(u-w,u-w)| \\
% 																&\leq C_3 \left\|\gamma(t)\right\| \\
% \left|g_{\gamma(t)}(u,w)\right|\leq L \left|\gamma(t)\right|
% \end{split}
%\end{equation*}
%by Lipschitz continuity of $g$ and $g_0(u,w)=0$, and hence
%\begin{equation*}
%-\frac{\left|g_{\gamma(t)}(\eta'(t),v'(t))\right|}{C_\gamma}\geq -C_3 \left\|\gamma(t)\right\| \left\|v'(t)\right\|,
%\end{equation*}
%for some $C_3>0$. 
%Collecting terms (and using $\left\|v'(t)\right\|^2\leq C_1\left\|v'(t)\right\|$) yields
\begin{equation*}\label{eq:22}
\left|\eta'(t)\right|_{\eta(t)}-|\gamma'(t)|_{\gamma(t)} \geq - {2L} \left\|v(t)\right\| - \frac {2L} {e} \cdot t  \left\|v'(t)\right\| + \frac {C_1} {2e}  \left\|v'(t)\right\|^2
\end{equation*}
 Now we integrate over $[0,h]$ and use the maximality of $L(\gamma)$ to deduce that the integral of the right hand side form $0$ to $h$ is non-positive. Thus, for a constant $C_2 >0$ (depending only on $e$ and $L$) 
\begin{equation}\label{eq:23}
\int_{0}^h \left\|v'(t)\right\|^2 dt \leq C_ 2\int_{0}^h (\left\|v(t)\right\|+ \left\|v'(t)\right\| t ) \, dt \;.
\end{equation}
%for some $C>0$.

%Since $\gamma$ is parametrized by Euclidean arclength, $\|\gamma (t)\| \leq t$ and thus, 
% replacing $C$ by a possibly larger constant, we arrive at
%  \begin{equation}
%  \int_{0}^h \left\|v'(t)\right\|^2 dt \leq C \int_{0}^h \left(\left\|v(t)\right\|+ \left\|v'(t)\right\| \cdot t \right) dt
%  \end{equation}

Since $v(0)=v(h)=0$ we use integration by parts to deduce
$$\int _0 ^h \|v(t)\|  \, dt = - \int _0 ^h (\| v(t)\| ) ' \cdot t \, dt \;.$$
Thus, using $|(\|v(t)\|) '| \leq \|v'(t)\|$ we obtain
$$ \int _0 ^h \|v'(t)\|^2 \, dt \leq 2 C_2 \cdot \int _0 ^h \|v'(t)\| \cdot t \, dt \;.$$
We apply Cauchy-Schwarz to deduce
$$\left(\int_{0}^h \left\|v'\right\|^2 \right)^2 \leq 4 C_2 ^2 \left(\int_{0}^h \left\|v'\right\|^2 \right)\cdot 
\left(\int_{0}^h t^2\, dt\right) = \frac 4 3 C_2^2 \cdot h^3 \cdot   \int_{0}^h \left\|v'\right\|^2 \;.$$
Thus, we obtain
$$\frac 4 3 C_2 ^2 h^3 \geq \int_{0}^h \left\|v'\right\|^2 \geq  \frac 1 h \cdot \left(\int _0 ^h \|v'\| \right)^2 \geq \frac 1 h \cdot v_0^2 \;,$$
with $v_0=\max_{t\in[0,h]}\|v(t)\|$. Thus, $\|v(t)\| \leq  2C_2 \cdot h^2$
for all $t\in [0,h]$. 

This provides a contradiction with \eqref{eq: D} and finishes the proof  of Theorem \ref{thm:extremal_timelike}.
\end{proof}

\section{Timelike \extremal curves}\label{sec:TL_ex_c}
%\subsection{Main estimate}
The following result 
%allows us to control the distinction between timelike and lightlike \extremal curves. It
 proves and strengthens   Proposition \ref{prop: graf}.
% and the main result of  \cite{Graf_Ling:2018}. 
\begin{prop}  \label{prop: final}
%Let the chart $U$ be as in Section \ref{sec:not} and sufficiently small.
 Let $\gamma:[0,r] \to U$ be a \extremal curve parametrized by Euclidean arclength. If $\mathcal L(\gamma)=0$ then $|\gamma '(t)|_g =0$ for all $t$.
 
 If $\mathcal L(\gamma) >0$ then 
 $\gamma$ is $\mathcal C^{1,1}$.  Moreover, for all $t\in  [0,r]$, 
 \begin{equation} \label{eq: velcont} 
 |\gamma '(t)|_g  \geq K\cdot \frac { \mathcal L(\gamma)}  {r} >0 \;,
 \end{equation}
 where the positive constant $K$ depends only on $U$. The curve $\gamma$ admits parametrizations with constant Lorentzian velocity and any such parametrization solves the geodesic equation in the sense of Filippov.
 %
 % Assume that $b(s):=|\gamma'(s)|_g >0$ for all $s\in [0,r]$.
%Then, for all $s\in [0,r]$ the following inequality holds:  \footnote{In the smooth case this follows as a \extremal curve parametrized proportional to $g$-arclength is smooth and solves the geodesic equation.}
%$$b(s) \geq \frac {\mathcal L(\gamma)}  {2r} \;.$$
\end{prop}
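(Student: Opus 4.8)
The plan is to reduce the statement to three results already available: \tref{thm:extremal_timelike} (uniformly timelike maximal curves are $\mathcal C^{1,1}$), \pref{prop: geodeq} (such a curve, reparametrized with constant Lorentzian speed, is a Filippov geodesic) and \lref{lem: fil} (the Euclidean speed of a Filippov geodesic is controlled by its Euclidean length and the length of its domain). The one genuinely new point will be that $\mathcal L(\gamma)>0$ forces $\gamma'(t)$ to be timelike for \emph{every} $t$, not merely for almost every $t$. To begin with the trivial case: if $\mathcal L(\gamma)=0$, then $\gamma$ is $\mathcal C^{1,1/4}$ by \pref{prop:half_H_cont}, so $t\mapsto g_{\gamma(t)}(\gamma'(t),\gamma'(t))$ is continuous; it is nonnegative since $\gamma$ is causal, and its continuous nonnegative square root integrates to $\mathcal L(\gamma)=0$, hence $|\gamma'(t)|_g=0$ for all $t$.

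Now assume $\mathcal L(\gamma)>0$ and set $V:=\{t\in[0,r]:|\gamma'(t)|_g>0\}$, an open subset of $[0,r]$ that is nonempty because $\mathcal L(\gamma)=\int_0^r|\gamma'(t)|_g\,dt>0$. The key claim is $V=[0,r]$. If not, some component interval of $V$ has an endpoint $c\in[0,r]$ with $|\gamma'(c)|_g=0$ while $|\gamma'(t)|_g>0$ on a one-sided neighbourhood, say $(c,c+\delta]$, inside $[0,r]$. For each small $\epsilon>0$ the subcurve $\gamma|_{[c+\epsilon,c+\delta]}$ is maximal (subcurves of maximal curves are maximal) and uniformly timelike, hence $\mathcal C^{1,1}$ by \tref{thm:extremal_timelike}; since then $|\gamma'|_g$ is Lipschitz and bounded away from $0$, reparametrizing it by Lorentzian arclength yields a $\mathcal C^{1,1}$, timelike, maximal curve $\tilde\gamma_\epsilon$ of constant Lorentzian speed on an interval of length $\mathcal L_\epsilon:=\mathcal L(\gamma|_{[c+\epsilon,c+\delta]})>0$, and along this reparametrization the Euclidean speed is precisely $1/|\gamma'|_g$. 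By \pref{prop: geodeq} (after an affine rescaling of the domain, the Filippov geodesic equation being invariant under affine reparametrizations) $\tilde\gamma_\epsilon$ is a Filippov geodesic, so \lref{lem: fil} bounds its Euclidean speed by $(e^{Cr_0}-1)/(C\mathcal L_\epsilon)$, with $C$ and the uniform Euclidean-length bound $r_0$ for causal curves in $U$ depending only on $U$. Hence $|\gamma'(t)|_g\geq C\mathcal L_\epsilon/(e^{Cr_0}-1)$ for all $t\in[c+\epsilon,c+\delta]$, in particular at $t=c+\epsilon$; letting $\epsilon\to0$, the left-hand side tends to $|\gamma'(c)|_g=0$ while $\mathcal L_\epsilon\to\mathcal L(\gamma|_{[c,c+\delta]})>0$, a contradiction. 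Therefore $|\gamma'(t)|_g>0$ for all $t$.

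Granting this, \tref{thm:extremal_timelike} applied to $\gamma$ itself gives $\gamma\in\mathcal C^{1,1}$, and reparametrizing $\gamma$ by Lorentzian arclength produces a $\mathcal C^{1,1}$, timelike, maximal curve $\tilde\gamma$ of constant Lorentzian speed on $[0,\mathcal L(\gamma)]$; by \pref{prop: geodeq} this $\tilde\gamma$ solves the geodesic equation in the sense of Filippov, and every constant-Lorentzian-speed parametrization of the curve differs from $\tilde\gamma$ by an affine change of parameter, hence is likewise a Filippov geodesic. Finally, applying \lref{lem: fil} to $\tilde\gamma$ on $[0,\mathcal L(\gamma)]$ and using once more that the Euclidean speed of $\tilde\gamma$ equals $1/|\gamma'|_g$ gives $|\gamma'(t)|_g\geq C\mathcal L(\gamma)/(e^{Cr}-1)\geq K\,\mathcal L(\gamma)/r$ with $K:=Cr_0/(e^{Cr_0}-1)$ (using $r\leq r_0$ and that $x\mapsto Cx/(e^{Cx}-1)$ is decreasing), which is \eqref{eq: velcont}.

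The main obstacle is the everywhere-timelikeness assertion of the second paragraph: a priori $|\gamma'|_g$ is known only to be continuous and could in principle touch $0$ at an isolated point, and no purely local estimate near such a point seems to rule this out. The argument circumvents it by reparametrizing maximal \emph{timelike} subarcs that approach the would-be null point into Filippov geodesics, and exploiting that the a priori speed bound of \lref{lem: fil} --- equivalently, the induced lower bound for $|\gamma'|_g$ --- does not degenerate as the subarc shrinks.
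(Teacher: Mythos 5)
Your proposal is correct and follows essentially the same route as the paper's own proof: first dispose of the $\mathcal L(\gamma)=0$ case via the $\mathcal C^1$-regularity of \pref{prop:half_H_cont}; then, on compact subintervals where $|\gamma'|_g>0$, invoke \tref{thm:extremal_timelike}, reparametrize by Lorentzian arclength, apply \pref{prop: geodeq} and \lref{lem: fil} to get the lower bound \eqref{eq: velcont} on that subinterval with a constant that does not degenerate as the subinterval grows, and finally pass to the closure to rule out any null velocity. The only cosmetic difference is that you phrase the non-degeneration argument in terms of a single one-sided boundary point $c$ and send $\epsilon\to 0$, whereas the paper lets $[a_0,b_0]\nearrow[a_1,b_1]$; these are the same limiting argument.
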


\begin{proof}
By Proposition \ref{prop:half_H_cont}, $\gamma$  is $\mathcal C^{1}$. If $\mathcal L(\gamma) =0$
then $|\gamma '(t)| _g =0$, for almost all $t\in [0,r]$. By continuity of $\gamma'$, this equality holds for all  $t$.

Assume $\mathcal L (\gamma)>0$.  Let $[a_0,b_0] \subset [0,r]$ be such that
 $|\gamma '(t)| _g >0$, for all $t\in [a_0,b_0]$.  By Theorem \ref{thm:extremal_timelike}, the restriction
 $\gamma :[a_0,b_0] \to U$ is $\mathcal C^{1,1}$.   
 
  For any interval $[a,b]\subset \R$ and  a non-decreasing Lipschitz function $f:[a,b] \to [a_0,b_0]$ the  reparametrization 
  $$\tilde \gamma =\gamma\circ f : [a,b] \to U$$
  has constant Lorentzian velocity  $\ell >0$ if and only if 
   \begin{equation} \label{eq: repar}
   f'(t)= \frac {\ell} {|\gamma ' (f(t))| _g} \,,
   \end{equation}
  for almost all $t \in [a,b]$. This is a differential equation for $f$ which has a unique maximal solution with $f(a)=a_0$. Integrating we see that $f(b_0)=b$ if and only if 
  $$\ell \cdot (b-a)=\mathcal L(\gamma |_{a_0,b_0]}) \;.$$
 
  The function $f'(t)$ is Lipschitz. Hence $f$ and, therefore, $\tilde \gamma$ are $\mathcal C^{1,1}$.
  
  Due to Proposition \ref{prop: geodeq}, $\tilde \gamma$ solves the geodesic equation in the sense of Filippov. Applying Lemma \ref{lem: fil}, for all $t\in [a,b]$,
  $$f'(t) =\|\tilde \gamma '(t)\| \leq  \frac {e^{Cr_0}-1} {C}   \cdot \frac 1 {b-a}\;,$$
  where the constant $C=C(U)>0$ depends only on the Lipschitz constant of $g$ and where 
  $r_0 =b_0-a_0$ denotes the Euclidean length of $\gamma |_{[a_0,b_0]}$. Inserting into \eqref{eq: repar}, we observe for all $t_0\in [a_0,b_0]$
  $$|\gamma '(t_0)| _g \geq  \ell \cdot (b-a) \cdot \frac {C} {e^{Cr_0} -1}  = \frac {Cr_0} {e^{Cr_0} -1} \cdot \frac 1 {r_0} \cdot \mathcal L (\gamma |_{[a_0,b_0]}) \;. $$
 
  Note that $ \frac {Cr_0} {e^{Cr_0} -1}$ is decreasing in $r_0$.   Thus,  for all $t_0\in [a_0,b_0]$,
  \begin{equation} \label{eq: subint}
  |\gamma '(t_0)|_g  \geq K\cdot \frac  {\mathcal L (\gamma |_{[a_0,b_0]})} {r_0} \;,
  \end{equation}
  where the constant $K=K(U)$ denotes $K=  \frac {Cr_{max}} {e^{Cr_{max}} -1}$ and $r_{max}$ is the maximal Euclidean length of causal curves in $U$ (which is bounded by twice the diameter of $U$).
  
  It remains to prove that the set 
  $T$ of all times $t\in [0,r]$ with $|\gamma '(t)|_g=0$ is empty.  Assuming the contrary and using $\mathcal L(\gamma)>0$,  we find a subinterval $[a_1,b_1] \subset  [0,r]$ such that $|\gamma '(t)| _g$ is positive for all 
  $t\in (a_1,b_1)$ and vanishes at one of the boundary points $a_1$ or $b_1$.
  
  We  let $[a_0,b_0] \subset (a_1,b_1)$ converge to $[a_1,b_1]$. Applying \eqref{eq: subint} to the intervals $[a_0,b_0]$, we   find a uniform positive  lower bound on $|\gamma '(t)|_g$ for all
  $t \in (a_1,b_1)$. By continuity, this positive bound is valid at the boundary points $a_1$ and $b_1$ as well. This contradiction finishes the proof. 
 \end{proof}  
  
   \begin{rem}
   	As the proof shows the constant $K$ can be chosen arbitrarily close to $1$, if $U$ is chosen sufficiently small.
   \end{rem}

\section{Final arguments}\label{sec:fin}
Now we are in position to  finish the proof of Theorem \ref{thm: main}.  The remaining control 
of lightlike curves is obtained by a limiting argument, independently discovered in \cite{Schinnerl:20}:

\begin{proof}[Proof of Theorem \ref{thm: main}]
  All statements of the theorem are local. Thus we may restrict to a sufficiently small chart $U$
  satisfying the assumptions of Section \ref{sec:not}.

  We call a causal curve $\gamma$ in $U$ tame if $\gamma$ is maximal and admits a $\mathcal C^{1,1}$-parametrization solving the geodesic equation in the sense of Filippov.   We need to prove that all maximal curves are tame. Due to Proposition \ref{prop: final}, all maximal curves of positive Lorentzian length are tame.

  We first claim  that any pair $x,y \in U$ of causally related points is connected by a tame curve. Without loss of generality let $y$ be in the future of $x$.  We find a sequence $y_n $ converging to $y$, such that $y$ and $y_n$ are related by a future directed timelike curve.  Then $x$ and $y_n$ are related by future directed causal curves of positive Lorentzian length. By global hyperbolicity of $U$ we find a maximal curve $\gamma _n$ in $U$ connecting $x$ and $y_n$.  The curve $\gamma _n$ satisfies $\mathcal L(\gamma _n)>0$.   By Proposition \ref{prop: final}, we can parametrize $\gamma_n$ on the interval $[0,1]$ with constant Lorentzian speed, so that it solves the geodesic equation in the sense of Filippov.
  
  By global hyperbolicity of $U$ and the uniform bound on velocities provided by Lemma \ref{lem: fil},
  the curves $\gamma _n$ converge pointwise, after choosing a subsequence, to a future directed causal curve $\gamma:[0,1] \to U$ connecting $x$ and $y$. Due to Corollary \ref{cor_fil_lim}, this limiting curve $\gamma$ is tame.

  Let now  $\gamma$ be an arbitrary maximal curve in  $U$. When parametrized by Euclidean arclength, 
  the curve $\gamma:[0,r]\to U$ is $\mathcal C^1$ by Proposition \ref{prop:half_H_cont}.  If $\mathcal L(\gamma) >0$ then $\gamma$ is tame by Proposition \ref{prop: final}.
  
  Thus we may and will assume $L(\gamma)=0$.  For any natural $m>1$ and $0\leq i\leq m$, consider points
  $x_i= \gamma (\frac i m \cdot r) $ subdividing $\gamma$ into $m$ pieces of equal length.
  By the above, we find for any $i=0,...,m-1$ a tame curve $\gamma ^{i}_m$ connecting $x_i$ and $x_{i+1}$.
  Consider the concatenation $\gamma _m$ of the curves $\gamma ^{i}_m$ for $i=0,...,m-1$.
  The curve $\gamma _m$ is a causal curve connecting $x=\gamma (0)$ and $y=\gamma (r)$.
  %Since $\gamma$ is maximal, any causal curve connecting $x$ and $y$ is maximal, in particular, 
  %$\gamma  _m$ is maximal.
  
  Clearly, for any parametrizations of the curves $\gamma _m$ with uniformly bounded speed, 
  the curves subconverge pointwise to a reparametrization of $\gamma$.   Hence, due to Lemma \ref{lem: fil} and Corollary \ref{cor_fil_lim}, in order to prove  that $\gamma $ is tame, we only need to verify that all curves $\gamma_m$ are tame.

   Since $\gamma$ is maximal and $\mathcal L(\gamma)=0$, any causal curve connecting $x$ and $y$ is maximal. In particular, 
  $\gamma  _m$ is maximal. By Proposition \ref{prop:half_H_cont},  $\gamma_m$ is $\mathcal C^1$ when parametrized by Euclidean arclength. By construction, all  $\gamma^{i}_m$ admit parametrizations solving the geodesic equation in the sense of Filippov.  
  
  Any affine reparametrization of a $\mathcal C^{1,1}$-curve solving the geodesic equation in the sense of Filippov is again a solution of the geodesic equation. Due to Lemma \ref{lem: fil},
  we can find such parametrization, having any prescribed positive Euclidean velocity at the starting point. 
  
   We now  start with any 
  parametrization $\gamma ^0_m :[0,t_1] \to U$ solving the geodesic equation and proceed by induction on $i$, to find a  parametrization $\gamma ^i_m :[t_i, t_{i+1}] \to U$ solving the geodesic equation and such that $\|(\gamma^i_m )'(t_i)\| =\|(\gamma ^{i-1} _m )'(t_i)\|$.
  
  The arising concatenation $\tilde \gamma _m :[0,t_{m}] \to U$ is a reparametrization of the $\mathcal C^1$-curve  $\gamma _m$ with the following properties.  The restriction of 
  $\tilde \gamma _m$ to any of the subintervals $[t_i,t_{i+1}]$ is $\mathcal C^{1,1}$ and solves
  on this subinterval  the geodesic equation in the sense of Filippov.   
  At the boundary points $t_i$ the incoming and the outgoing directions  of $\tilde \gamma_m$ have the same norms. Moreover, since $\gamma _m$ is $\mathcal C^1$, these incoming
  and outgoing directions are parallel vectors, hence they coincide.
  
  Therefore, the derivative  $\tilde \gamma _m '$ is continuous. Since $\tilde \gamma _m'$ is Lipschitz continuous on the subintervals $[t_i,t_{i+1}]$, the derivative $\tilde \gamma _m'$
  is Lipschitz continuous on all of $[0,t_m]$. Thus, $\tilde \gamma _m$ is a $\mathcal C^{1,1}$-curve.  Clearly, $\tilde \gamma _m$  solves  the geodesic equation almost everywhere 
  on  any interval $[t_i,t_{i+1}]$, hence almost everywhere on the whole interval of definition
  	$[0,t_m]$.

  	This shows the tameness of   $\gamma _m$  and finishes the proof.
  \end{proof}
  
  %\cs{
  %\begin{rem}
 %Let us point out that the proof of Theorem \ref{thm: main} above does not use the push-up principle. In fact, it follows from the argument as it shows that two points, which are connected by a causal curve of positive Lorentzian length, can be connected by a timelike curve. Moreover, note that the push-up principle holds for Lipschitz continuous metrics but fails in general for H\"older continuous ones, see (\cite[Cor.\ 1.17, Lem.\ 1.22, Ex.\ 1.11]{Chrusciel_Grant:12}).
  %\end{rem}
%}

\bibliographystyle{alpha}
\bibliography{Lorentz}

\end{document}